\newcommand{\C}{\mathbb{C} }
\newcommand{\R}{\mathbb{R} }
\newcommand{\Z}{\mathbb{Z} }
\newcommand{\N}{\mathbb{N} }
\newcommand{\lf}{K} 
\newcommand{\roi}{\mathfrak{o}} 
\newcommand{\uniformizer}{\pi} 
\newcommand{\partition}{\mathcal{P}} 
\newcommand{\scales}{\mathcal{R}}
\newcommand{\curve}{\gamma} 
\newcommand{\syzygies}{\mathcal{S}} 
\newcommand{\oddprime}{\mathfrak{p}}
\newcommand{\1}{{\bf 1}}
\newcommand{\eof}[1]{e({#1})}
\newcommand{\FT}[1]{\widehat{#1}}
\newcommand{\vof}[1]{{\boldsymbol #1}}
\newcommand{\trace}[2]{{\Tr_{#1}(#2)}}
\def\dd{{\,{\rm d}}}
\def\translate{{{\rm T}}}
\def\Hconstant{{{\rm H}}}
\def\S{{{\rm S}}}
\DeclareMathOperator{\Tr}{Tr}
\newtheorem{theorem}{Theorem}
\newtheorem{lemma}[theorem]{Lemma}
\newtheorem{proposition}[theorem]{Proposition}
\theoremstyle{remark}
\newtheorem{remark}{Remark}
\newtheorem*{prosexei}{Warning}
\title[Reinforcing a Philosophy: The moment curve]{\vspace*{-2.0cm}
Reinforcing a Philosophy: 
Littlewood--Paley theory for the moment curve over generic local fields
}
\author[K. Hughes]{Kevin Hughes}
\address{
School of Computing, Engineering \& the Built Environment, 
Edinburgh Napier University, 
Edinburgh EH10 5DT, UK
}
\email{khughes.math@gmail.com} 
\begin{document}

\begin{abstract}
Using the Girard--Newton formulae, I give a simple proof of isotropic square function estimates for extension operators along the moment curve in generic local fields. 
Using Bezout's Theorem and the Implicit Function Theorem, I give an alternate, sharper proof for real or complex non-degenerate, polynomial curves. 
\end{abstract}

\maketitle

\section{Introduction}

The purpose of this paper is to expose two simple arguments for square function estimates of the moment curve by utilizing the number theory paradigm: 
\[ \texttt{\color{purple} Special subvarieties dictate the analysis}. \]
This paradigm will be investigated in the context of proving a square function estimate for the moment curve over generic local fields. 
My hope is to provide some context to recent works in harmonic analysis and relate them to  common, elementary ideas in analytic number theory. 
Stating the results requires a brief set-up taken from \cite{BBH}.


In this paper, let $\lf$ denote a (one-dimensional) local field; its non-trivial topology is determined by the metric associated to a fixed absolute value $|\cdot|_\lf : \lf \to \R_{\geq0}$. 
On $\R$, this will be the usual absolute value. On $\C$, we choose our absolute value so that $|x+iy|_\C = \max\{|x|,|y|\}$ where the latter is the usual absolute value on $\R$. 
In the results below, $\lf$ will be fixed; so, I usually suppress the dependence on $K$ in future notations. For $n \in \N$, extend the metric to $\lf^n$ by defining $|\vof{x}| = \max\{|x_1|,\dots,|x_n|\}$ for $\vof{x}=(x_1,\dots,x_n) \in \lf^n$. 
Always assume that the dimension $n$ is at least two.

When $\lf$ is a non-Archimedean local field, let $\roi := \{ x : |x| \leq 1 \}$ denote its ring of integers and $\oddprime := \{ x : |x|<1 \}$ denote its maximal ideal. 
The image of $|\cdot|_{\lf}$ is isomorphic to $\Z$ as an abelian group and $\lf$ comes with a uniformizing element, say $\uniformizer$, generating this group. 
When $\lf=\R$, let $\roi=[0,1]$, and when $\lf=\C$, let $\roi:=\{x+y\sqrt{-1} : x,y \in [0,1]\}$. 

For a non-Archimedean local field $\lf$, define its scales as $\scales(\lf) := \{ |\uniformizer|^{s} : s \in \Z_{\geq0} \}$. For a scale $\delta \in \scales(\lf)$, let $\partition_{\delta}(\lf)$ denote the partition of $\roi$ into closed (and also open) balls of radius $\delta$; that is, $\partition_{\delta} := \{ (i+\oddprime^s)_{i \in \roi/\oddprime^s} \}$ for some $s \in \N$. 
Over $\R$, define its scales as $\scales(\R) := \{ R^{-1} : R \in \N \}$, and for each $\delta$ in $\scales(\R)$, define the partition 
\(
\partition_{\delta}(\R) 
:= \{ [j\delta,(j+1)\delta) : i = 0,\dots,\delta^{-1}-1 \}
.\) 
Over $\C$, define its scales as $\scales(\C) := \{ R^{-1} : R \in \N \}$, and for each $\delta$ in $\scales(\C)$, define the partition 
\(
\partition_{\delta}(\C) 
:= \{ [j\delta,(j+1)\delta) + i[k\delta,(k+1)\delta) :  j,k=0, \dots, \delta^{-1}-1 \}
.\) 

For non-Archimedean fields, we fix a non-trivial additive character $\eof{\cdot}$ such that $\eof{\roi}=1$ and $\eof{1/\uniformizer} \neq 1$. 
On $\R$, $\eof{t} := e^{-2\pi it}$ denotes the usual character, and on $\C$, $\eof{z} := e^{-2\pi i \trace{\C/\R}{z}/2} = e^{-2\pi i {x}}$ for $z=x+iy \in \C$ with $x,y \in \R$ and $i:=\sqrt{-1}$. (This is the usual character when interpreting $\C$ as $\R^2$ in the usual way. And over $\R$ and $\C$, $\pi$ denotes the standard mathematical constant.) 
On each local field, the Haar measure $\dd{\xi}$ on $\lf$ is normalized so that the measure of $\roi$ is 1. 
For a function $f$, write its Fourier transform as $\FT{f}$. 

Throughout, assume that the functions $f$ are compactly supported, measurable functions. For a fixed local field $\lf$, a measurable set $I$ in $\lf$ and a curve $\curve : I \to \lf^n$, define the \emph{extension operator along $\curve$ over $I$} as  
\[
E_{I}f(\vof{x}) 
:= 
\int_{I} f(\xi) \eof{\curve(\xi) \cdot \vof{x}} \dd{\xi} 
\quad \text{for points} \quad \vof{x} \in \lf^n.
\]
For $\delta \in \scales(\lf)$, define the \emph{square function along $\curve$ at scale $\delta$} as 
\[
S_{\delta} f(\vof{x}) 
:= 
\Big( \sum_{J \in \partition_{\delta}} |E_{J}f(\vof{x})|^2 \Big)^{1/2} 
.\]
Suppose that $w : \lf^n \to \lf$ is a reasonable function. For each $\vof{c} \in \lf^n$ and each $R \in \lf \setminus \{0\}$, define 
\[ 
w_{\vof{c},|R|}(\vof{x}) 
:= w\big(\frac{\vof{x}-\vof{c}}{|R|}\big) 
\quad \text{for} \quad \vof{x} \in \lf^n 
.\] 
I will also write this as $w_B$ when $B$ is the associated box with center $\vof{c}$ and sidelengths $|R|$. 
Finally, define the constant 
\begin{equation*}\label{inequality:definition}
\Hconstant_\curve(w) 
:= 
\sup_{\delta \in \scales(\lf)}\sup_{\vof{c} \in \lf^n} \sup_{f \neq 0}  
\| E_{\roi} f \|_{L^{2n}(w_{\vof{c},\delta^{-1}})} \big/ \| S_{\delta} f \|_{L^{2n}(w_{\vof{c},\delta^{-1}})} 
.\end{equation*}

Suppose that $w:\R \to \R$ is a Schwartz function which is non-negative, at least 1 on the unit interval $[0,1]$ and for which $\widehat{w}$ is supported on $[-1,1]$ and non-negative. 
We define the $n$-dimensional version of $w$ as $W(\vof{x}) := w(x_1)\cdots w(x_n)$ for $\vof{x} \in \R^n$. Then $W$ is at least 1 on the cube $[0,1]^n$ while $\widehat{W}$ is supported on the cube $[-1,1]^n$. 
On $\C$, define our function $W(\vof{x}+i\vof{y})$ to be $W(\vof{x})W(\vof{y})$ for $\vof{x}, \vof{y} \in \R^n$. 
Finally, on a non-Archimedean field $\lf$, we define $W := \1_{\{\vof{z} \in \lf^n : |\vof{z}| \leq 1\}}$. Then $W$ is supported on and is 1 on the cube $\roi^n$ while $\widehat{W}$ is supported, and also 1, on the cube $\roi^n$. 

For a non-Archimedean local field $\lf$, define the constants $C_{\lf} := 1$. For $\lf=\R$, define the constant $C_{\R} := 7$. For $\lf=\C$, define the constant $C_{\C} := 7^{2}$. 
we can finally state the theorems. 
\begin{theorem}\label{theorem:main}
Let $\lf$ be a local field and $\curve(T) := (T,T^2,\dots,T^n)$. Assume that the characteristic of $K$ is either 0 or greater than $n$. 
We have the inequality $\Hconstant_\curve(W) \leq (C_{\lf} n)^{1/2}$. 
In other words, for each scale $\delta \in \scales(\lf)$ and $\vof{c}$ in $\lf^n$, we have the inequality 
\begin{equation}\label{ineq:main}
\| E_{\roi} f \|_{L^{2n}(W_{\vof{c},\delta^{-n}})} 
\leq (C_{\lf} \cdot n)^{1/2} \; 
\| S_{\delta} f \|_{L^{2n}(W_{\vof{c},\delta^{-n}})} 
.\end{equation}
\end{theorem}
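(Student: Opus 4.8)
The strategy is to linearise the norm via the identity $\|E_\roi f\|_{L^{2n}(w)}^{2n} = \|(E_\roi f)^n\|_{L^2(w)}^2$ and then exploit the arithmetic of the $n$-th power of an extension operator along the moment curve. Expanding the power, $(E_\roi f)^n(\vof x) = \int_{\roi^n}\big(\prod_{j=1}^n f(\xi_j)\big)\,\eof{\sum_{j=1}^n\curve(\xi_j)\cdot\vof x}\,\dd\vof\xi$, and the decisive observation is that $\sum_{j=1}^n\curve(\xi_j)\cdot\vof x = \sum_{i=1}^n x_i\,p_i(\vof\xi)$, where $p_i(\vof\xi) := \xi_1^i+\cdots+\xi_n^i$ is the $i$-th power sum of $\vof\xi=(\xi_1,\dots,\xi_n)$. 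By the Girard--Newton formulae the vector of power sums $(p_1,\dots,p_n)$ and the vector of elementary symmetric polynomials $(e_1,\dots,e_n)$ determine one another by a triangular polynomial change of variables; this is the only place the hypothesis on the characteristic of $\lf$ is needed, since Newton's recursion divides by the integers $1,\dots,n$. Since two $n$-tuples have the same elementary symmetric values exactly when one is a permutation of the other, the frequency $(p_1,\dots,p_n)(\vof\xi)$ carried by the integrand depends only on the multiset $\{\xi_1,\dots,\xi_n\}$ --- this is the special subvariety that drives the estimate.

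Accordingly I would partition $\roi$ into the balls of $\partition_\delta$, write $(E_\roi f)^n = \sum_{(J_1,\dots,J_n)\in\partition_\delta^{\,n}}\prod_{k=1}^n E_{J_k}f$, and collect the $n$-tuples of balls into their orbits under the symmetric group on $n$ letters, that is, into multisets $\mathbf J = \{J_1^{m_1},\dots,J_r^{m_r}\}$ of $\delta$-balls with $m_1+\cdots+m_r=n$. Because the product is symmetric in its factors, summing $\prod_k E_{J_k}f$ over the $\binom{n}{m_1,\dots,m_r}$ orderings of $\mathbf J$ yields the block $G_{\mathbf J} := \binom{n}{m_1,\dots,m_r}\prod_{i=1}^r(E_{J_i}f)^{m_i}$, a multinomial coefficient times a monomial in the single-ball pieces. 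The heart of the proof is an essential-orthogonality statement for these blocks against the weight $W_{\vof c,\cdot}$: the Fourier transform of $G_{\mathbf J}$ is supported on the set of frequencies $(p_1,\dots,p_n)(\vof\xi)$ with $\vof\xi$ ranging over tuples of ball-multiset $\mathbf J$ --- a set depending only on $\mathbf J$ --- and by the quantitative form of the Girard--Newton principle above, supplemented over $\R$ and $\C$ by an elementary bound on how far the roots of a monic polynomial move under a small perturbation of its coefficients, the frequency supports of two distinct multisets coincide for only $O_{\lf,n}(1)$ ``neighbouring'' pairs and otherwise lie farther apart than the compact Fourier support of $W_{\vof c,\cdot}$. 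On a non-Archimedean field the ultrametric makes this loss-free, which is why $C_{\lf,n}=1$ there; over $\R$ and $\C$ the number of neighbouring multisets enters the covering argument, and its optimisation is what splits $C_{\R,n}$ into $7^n$ for $2\le n\le 6$ and $5^n$ for $n\ge 7$, with $C_{\C,n}$ the corresponding square. A Cauchy--Schwarz over the neighbouring pairs then gives $\|(E_\roi f)^n\|_{L^2(W_{\vof c,\cdot})}^2 \le C_{\lf,n}\sum_{\mathbf J}\|G_{\mathbf J}\|_{L^2(W_{\vof c,\cdot})}^2$.

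It remains to identify the right-hand side with a square-function norm. Expanding $(S_\delta f)^{2n} = \big(\sum_{J\in\partition_\delta}|E_J f|^2\big)^n$ by the multinomial theorem and regrouping by multisets gives $\big(\sum_J|E_J f|^2\big)^n = \sum_{\mathbf J}\binom{n}{m_1,\dots,m_r}\prod_i|E_{J_i}f|^{2m_i}$, whereas $\sum_{\mathbf J}|G_{\mathbf J}|^2 = \sum_{\mathbf J}\binom{n}{m_1,\dots,m_r}^2\prod_i|E_{J_i}f|^{2m_i}$. Since every multinomial coefficient is bounded by $n!\le n^n$, one has $\sum_{\mathbf J}|G_{\mathbf J}|^2\le n^n\,(S_\delta f)^{2n}$ pointwise, so $\|E_\roi f\|_{L^{2n}(W_{\vof c,\cdot})}^{2n}\le C_{\lf,n}\,n^n\,\|S_\delta f\|_{L^{2n}(W_{\vof c,\cdot})}^{2n}$; taking $2n$-th roots yields $\Hconstant_\curve(W)\le C_{\lf,n}^{1/2n}n^{1/2}$, hence \eqref{ineq:main}. (Moving between the weight at reciprocal scale $\delta$ used to define $\Hconstant_\curve$ and the weight $W_{\vof c,\delta^{-n}}$ in \eqref{ineq:main} is routine bookkeeping: $\scales(\lf)$ is closed under $\delta\mapsto\delta^n$, and $E_\roi f$, $S_\delta f$ are periodic on non-Archimedean $\lf$ and suitably locally constant on $\R,\C$, so both sides are insensitive to dilating the weight past the reciprocal of the finest scale $\delta^n$ appearing in the $n$-th power.)

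\medskip\noindent\emph{Main obstacle.} Everything outside the essential-orthogonality step is bookkeeping. On a non-Archimedean field that step is transparent; over $\R$ and $\C$ it demands a fully explicit quantitative Girard--Newton principle --- controlling, via root perturbation, how close the frequency supports of two blocks can come as a function of how differently their multisets of $\delta$-intervals sit inside $\roi$ --- followed by the covering argument that bounds the number of mutually interacting blocks, and it is this optimisation that pins down the constants $C_{\R,n}$ and $C_{\C,n}$.
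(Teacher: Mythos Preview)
Your proposal runs on the same engine as the paper --- Girard--Newton to pass from power sums to the coefficients of $\prod_i(X-\xi_i)$, a diagonal/orthogonality statement for the resulting frequency supports against $W_B$, then Cauchy--Schwarz --- but organises the bookkeeping differently, and that difference matters. The paper never groups into multisets: it expands over \emph{ordered} tuples $\vof I,\vof J\in\partition_\delta^{\,n}$, uses the Fourier support of $W_B$ to restrict to $\vof J\in\syzygies(\delta,\vof I;\delta^n)$, Cauchy--Schwarzes against the uniform bound $\S_\curve=\max_{\vof I}|\syzygies(\delta,\vof I;\delta^n)|$, and then (Proposition~\ref{prop:main}) proves $\S_\curve\le C_{\lf,n}n^n$ using only the pigeonhole ``each $t_j$ lands in some $I_i$''. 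Your multiset-block orthogonality --- distinct multisets $\mathbf J\neq\mathbf J'$ give (almost) orthogonal $G_{\mathbf J},G_{\mathbf J'}$ --- is the \emph{strong} diagonal property (close monic polynomials have the same root-multiset at scale $\delta$); this is true, and your root-perturbation sketch points at the right mechanism, but it is precisely the sharper statement the paper attributes to \cite{WH} and does \emph{not} itself prove: the paper's pigeonhole cannot recover it for $n\ge4$ (see the remarks following Proposition~\ref{prop:main}). Having leaned on the stronger lemma, you then give the gain back by bounding the multinomial coefficient crudely by $n^n$; carried through carefully your organisation actually yields $n!$. In short: same idea, but your decomposition quietly assumes a harder orthogonality input than the paper establishes, whereas the paper's ordered-tuple count is more elementary and fully self-contained for the stated $C_{\lf,n}n^n$ bound.
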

\noindent The bound \eqref{ineq:main} is close to sharp. 
Indeed, Stirling's Approximation $n! \sim \sqrt{2\pi n} (n/e)^{n}$ and the following lower bound implies that \eqref{ineq:main} is asymptotically off by a constant. 
\begin{theorem}\label{theorem:lower_bound}
Fix $n \geq 2$. 
For each local field $\lf$ and polynomial curve $\curve \in \lf[T]^n$, 
\begin{equation}
\Hconstant_{\curve}(W) \geq (n!)^{1/2n}
.\end{equation}
\end{theorem}

The problem of finding sharp constants and extremizers for square function estimates appears interesting. 
Inspired by \cite{Wooley:symmetric, PW}, I give two sharper, more general versions of Theorem~\ref{theorem:main} for Archimedean fields using calculus and classical algebraic geometry. 
Define $\eta_{\lf}$ to be 1 if $\lf$ is $\R$ and 2 if $\lf$ is $\C$. 
For a curve $\curve : \roi \to \lf^n$, define its Lipschitz norm 
\[
\ell(\curve) = \sup_{i=1,\dots,n} \sup_{s,t \in \roi} \frac{|\curve_i(t)-\curve_i(s)|}{|t-s|}
.\]
Observe that $\ell(\curve)$ is finite when $\curve$ is polynomial and $\roi$ is compact. 

\begin{theorem}\label{theorem:Bezout}
Let $\lf$ be $\R$ or $\C$ and $\curve \in \lf[T]^n$ be a non-degenerate polynomial curve. 
We have the inequality $\Hconstant_\curve(W) \leq \big( 2\lceil \ell(\curve) \rceil +1 \big)^{\eta_{\lf}/2} \big( \prod_{i=1}^{n} \deg(\curve_i) \big)^{1/2n}$. 
\end{theorem}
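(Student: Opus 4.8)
The plan is to recast the estimate as a finite-overlap (C\'ordoba-type) counting problem and then control the relevant multiplicity by Bezout's theorem, with the implicit function theorem converting the algebraic count into the geometric one. First I would fix the scale $\delta$, the centre $\vof{c}$, and a nonzero $f$, and expand
\[
\|E_{\roi}f\|_{L^{2n}(W_{\vof{c},\delta^{-1}})}^{2n} = \int_{\lf^n} \bigl| (E_{\roi}f)^n(\vof{x}) \bigr|^2\, W_{\vof{c},\delta^{-1}}(\vof{x}) \dd{\vof{x}}, \qquad (E_{\roi}f)^n(\vof{x}) = \int_{\roi^n} \prod_{i=1}^n f(\xi_i)\, \eof{\Gamma(\vof{\xi})\cdot \vof{x}}\, \dd{\vof{\xi}},
\]
where $\Gamma(\vof{\xi}) := \sum_{i=1}^n \curve(\xi_i)$. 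Writing $(E_{\roi}f)^n = \sum_{\vof{J}} F_{\vof{J}}$ as a sum over $n$-tuples $\vof{J}=(J_1,\dots,J_n)$ of $\delta$-cells with $F_{\vof{J}} := \prod_i E_{J_i}f$, each $F_{\vof{J}}$ has Fourier transform supported in the ``Minkowski cap'' $\Gamma(J_1\times\cdots\times J_n)$, while the square function side unwinds to exactly $\|S_\delta f\|_{L^{2n}(W_{\vof{c},\delta^{-1}})}^{2n} = \sum_{\vof{J}} \|F_{\vof{J}}\|_{L^2(W_{\vof{c},\delta^{-1}})}^2$. Since $\widehat{W_{\vof{c},\delta^{-1}}}$ is supported in a small ball and $W_{\vof{c},\delta^{-1}}\geq 0$, a Schur-test argument (Cauchy--Schwarz in $L^2$ of the measure $W_{\vof{c},\delta^{-1}}\dd{\vof{x}}$) reduces the theorem to a bounded-overlap statement: for each fixed $\vof{J}$, the number of $\vof{J}'$ whose Minkowski cap comes within the Fourier scale of $\Gamma(J_1\times\cdots\times J_n)$ is at most $\bigl(2\lceil \ell(\curve)\rceil+1\bigr)^{\eta_{\lf}}\prod_i \deg(\curve_i)$.

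Proving this overlap bound is the core. Fix the centres $\vof{t}$ of $\vof{J}$; since $\curve$ is $\ell(\curve)$-Lipschitz, a cap $\Gamma(J_1'\times\cdots\times J_n')$ can only be close to $\Gamma(J_1\times\cdots\times J_n)$ if $\Gamma(\vof{t}')$ lies in a ball around $\Gamma(\vof{t})$ whose radius is controlled by $\ell(\curve)$ and the Fourier scale, so it suffices to count $n$-tuples of centres $\vof{t}'$ in the $\delta$-net of $\roi$ with $\Gamma(\vof{t}')$ in a prescribed small ball. Now the \emph{exact} fibre $\{\vof{s}\in\lf^n:\Gamma(\vof{s})=\vof{v}\}$ is the common zero set of the $n$ polynomials $\sum_i\curve_j(s_i)-v_j$ ($j=1,\dots,n$); non-degeneracy of $\curve$ is precisely what guarantees that this system is zero-dimensional and has no excess component at infinity, so Bezout's theorem bounds the number of complex solutions, hence the number in $\roi^n$, by $\prod_i\deg(\curve_i)$. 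At each such solution the Jacobian $D\Gamma$ is a generalised Vandermonde-type matrix; where it is invertible the implicit function theorem gives a local diffeomorphism onto a neighbourhood of $\vof{v}$, and tracking constants through it — the key quantitative input being again the Lipschitz control on $\curve$ — shows that the prescribed ball pulls back to a set meeting only $\bigl(2\lceil \ell(\curve)\rceil+1\bigr)^{\eta_{\lf}}$ of the $\delta$-net points near that solution ($\eta_{\lf}$ being the real dimension of $\lf$). Multiplying the two estimates gives the overlap count, hence $\|E_{\roi}f\|_{L^{2n}(W_{\vof{c},\delta^{-1}})}^{2n}\leq \bigl(2\lceil \ell(\curve)\rceil+1\bigr)^{\eta_{\lf}}\prod_i\deg(\curve_i)\cdot\|S_\delta f\|_{L^{2n}(W_{\vof{c},\delta^{-1}})}^{2n}$; taking $2n$-th roots and supremising over $\delta$, $\vof{c}$, $f$ finishes.

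The step I expect to be the main obstacle is the \emph{degenerate locus}, where two or more of the $s_i$ collide: there $\det D\Gamma$ vanishes, the implicit function theorem in the simple form above breaks down, and a single Minkowski cap is near many others. The resolution is that on this locus $\Gamma(J_1\times\cdots\times J_n)$ is not a full-dimensional box but a much thinner curved tube — each collision lowers its effective dimension — so the degenerate caps still have controlled overlap; making this rigorous means stratifying the tuples $\vof{J}$ by which blocks of indices lie in mutually adjacent cells and running the Bezout-plus-implicit-function argument only on the transverse variables of each stratum, with $\ell(\curve)$ governing the tube widths. This stratification, together with keeping every constant explicit, is also what pins down the precise shape of the bound, with the Lipschitz factor entering only to the power $\eta_{\lf}$ and the Bezout factor only to the power $1/2n$.
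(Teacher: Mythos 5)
Your proposal follows essentially the same route as the paper: expand $\|E_{\roi}f\|_{L^{2n}}^{2n}$ into a double sum over $n$-tuples of $\delta$-cells, use the Fourier-support constraint from the weight together with a few Cauchy--Schwarz steps to reduce the matter to a bounded-overlap count (this is exactly the paper's Lemma~\ref{lemma:an_uncertain_CS}, which gives $\Hconstant_\curve(W)^{2n} \leq \S_\curve$), and then bound the overlap by combining Bezout's theorem for the exact fibre $\{\Gamma(\vof{s})=\vof{v}\}$ with the Implicit Function Theorem and the Lipschitz bound $\ell(\curve)$ to fatten to the $\delta$-scale. Your concern about the degenerate locus, and the proposed resolution via stratification by blocks of adjacent cells, matches in spirit the paper's handling via subsystems with non-vanishing Wronskian. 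So conceptually the proof is the paper's proof.

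One genuine slip is in the bookkeeping of the overlap count. After Bezout gives at most $\prod_i\deg(\curve_i)$ exact solutions $\vof{y}\in\lf^n$, the pullback neighbourhood $U_{\vof{y}}$ has side-lengths $\lesssim \ell(\curve)\delta$ in \emph{each of the $n$ coordinates}, and the cells $\vof{J}$ you are counting are $n$-tuples in $\partition_\delta^n$. So the number of $\delta$-cells meeting $U_{\vof{y}}$ is $\bigl(2\lceil\ell(\curve)\rceil+1\bigr)^{n\eta_\lf}$, not $\bigl(2\lceil\ell(\curve)\rceil+1\bigr)^{\eta_\lf}$ as you assert — you have dropped the factor of $n$ in the exponent by effectively counting neighbours of a single coordinate rather than of the full $n$-tuple. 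With the corrected count $\S_\curve \leq \bigl(2\lceil\ell(\curve)\rceil+1\bigr)^{n\eta_\lf}\prod_i\deg(\curve_i)$, taking $2n$-th roots via Lemma~\ref{lemma:an_uncertain_CS} recovers precisely the theorem's bound $\Hconstant_\curve(W) \leq \bigl(2\lceil\ell(\curve)\rceil+1\bigr)^{\eta_\lf/2}\bigl(\prod_i\deg(\curve_i)\bigr)^{1/2n}$, whereas your count would give the stronger (and, as far as the argument shows, unjustified) power $\eta_\lf/2n$ on the Lipschitz factor.
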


\noindent When $\gamma$ is the moment curve, the proof of Theorem~\ref{theorem:Bezout} demonstrates that we have `strong diagonal behavior'. For more general non-degenerate curves, any off-diagonal behavior is constrained by Bezout's theorem. 
These give the special subvarieties under-pinning square function estimates. 

Over $\R$, there are much sharper bounds for fewnomials. 
\begin{theorem}\label{theorem:fewnomial}
Let $\curve$ be a non-degenerate, polynomial curve in $\R^n$ such that the total number of monomials appearing in $\curve$ is $M$. 
We have the inequality $$\Hconstant_\curve(W) \leq \big( 2\lceil \ell(\curve) \rceil +1 \big)^{1/2} \big( 2^{M(M-1)/2} (n+1)^{M} \big)^{1/2n}.$$ 
\end{theorem}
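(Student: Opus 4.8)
The plan is to follow the same architecture as the proof of Theorem~\ref{theorem:Bezout}, replacing the use of Bezout's theorem on the diagonal/off-diagonal incidence count with a sharper count coming from fewnomial theory (Khovanskii's theorem, or more precisely Descartes-type bounds over $\R$). The starting point is the standard reduction: expanding $\|E_\roi f\|_{L^{2n}(W_{\vof c,\delta^{-n}})}^{2n}$ as an integral of $|E_\roi f|^{2n} = |E_\roi f|^n \overline{|E_\roi f|^n}$, writing each $E_\roi f = \sum_{J \in \partition_\delta} E_J f$, and using the Fourier support of $W$ (supported in a cube of sidelength comparable to $1$ after the rescaling by $\delta^{-n}$) to restrict the resulting frequency sum to those $n$-tuples of intervals $(J_1,\dots,J_n)$ and $(J_1',\dots,J_n')$ for which $\curve(\xi_1)+\dots+\curve(\xi_n)$ and $\curve(\xi_1')+\dots+\curve(\xi_n')$ are within $O(\delta)$ of each other for some choice of $\xi_i \in J_i$, $\xi_i' \in J_i'$. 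Because $\curve$ is non-degenerate and polynomial, this is a system of $n$ near-equalities in the symmetric functions of the $\xi$'s; after using the non-degeneracy to pass (via the Implicit Function Theorem, exactly as in Theorem~\ref{theorem:Bezout}) to the moment-curve model locally, the constraint forces $(J_1,\dots,J_n)$ to be a permutation of $(J_1',\dots,J_n')$ up to $O(1)$ ambiguity — this is the `strong diagonal behavior.'

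The key new step is the quantitative count of how many off-diagonal tuples survive, and this is where fewnomials enter. One decomposes $\roi$ into $O(\ell(\curve))$ subintervals on which $\curve$ is injective with derivative bounded below (the factor $(2\lceil\ell(\curve)\rceil+1)^{1/2}$ in the statement comes from this decomposition, as in the previous theorem), and on each such piece one must bound the number of solutions to a polynomial system built from $\curve$ having at most $M$ monomials in total. The relevant bound is Khovanskii's fewnomial bound: a system of $n$ polynomial equations in $n$ variables involving $M$ distinct monomials in total has at most $2^{\binom{M}{2}}(n+1)^{M}$ non-degenerate positive real solutions. Applying this to the system expressing the coincidence of the two frequency sums (suitably set up so that the monomials of $\curve$ are the only monomials that appear) yields the factor $\big(2^{M(M-1)/2}(n+1)^M\big)^{1/2n}$ after taking the $2n$-th root, matching the $2n$-th root normalization in the definition of $\Hconstant_\curve$.

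After the combinatorial count, the rest is bookkeeping identical to Theorem~\ref{theorem:Bezout}: Cauchy--Schwarz over the bounded number of off-diagonal configurations converts the multilinear estimate back into the square function $S_\delta f$, and one checks the weight $W_{\vof c,\delta^{-n}}$ is handled by the Fourier-support/Schwartz-tail properties of $W$ recorded in the set-up. The main obstacle I anticipate is the precise setup of the polynomial system so that \emph{only} the $M$ monomials of $\curve$ (and not auxiliary monomials introduced by differences or by the localization) contribute to the monomial count — a naive formulation of $\curve(\xi_1)+\dots+\curve(\xi_n) = \curve(\xi_1')+\dots+\curve(\xi_n')$ is a system in $2n$ variables, and one must either exploit the permutation structure to halve the variables or argue that the extra variables do not inflate $M$; getting the exact exponent $2^{M(M-1)/2}(n+1)^M$ rather than something weaker requires care here. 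A secondary technical point is ensuring the solutions counted are non-degenerate (so that Khovanskii applies directly), which should follow from the non-degeneracy hypothesis on $\curve$ together with a generic perturbation argument, but must be stated carefully.
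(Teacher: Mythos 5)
Your plan follows the same path as the paper: reduce via Lemma~\ref{lemma:an_uncertain_CS} to bounding $|\syzygies(\delta,\vof{I};\delta^n)|$ and run the argument of Proposition~\ref{prop:Bezout} with Khovanskii's fewnomial theorem substituted for Bezout's theorem. One small misattribution: the factor $\big(2\lceil\ell(\curve)\rceil+1\big)^{1/2}$ does not arise from decomposing $\roi$ into $O(\ell(\curve))$ subintervals on which $\curve$ is injective; in the paper's proof of Proposition~\ref{prop:Bezout} it comes from the Implicit Function Theorem step, which fattens each isolated solution into a box of side-length $\leq \ell(\curve)\delta$, so that each isolated solution contributes at most $\big(2\lceil\ell(\curve)\rceil+1\big)^{n}$ tuples $\vof{J}\in\partition_\delta^n$; the exponent $1/2$ then appears after taking $(2n)$-th roots through Lemma~\ref{lemma:an_uncertain_CS}.

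More substantively, the obstacle you flag about the monomial count is genuine, and it is not resolved by you and (as far as I can tell) not addressed by the paper either, which leaves Proposition~\ref{prop:fewnomial} to the reader as ``almost identical'' to the Bezout case. Fixing $\vof{x}\in\vof{I}$ as in the proof of Proposition~\ref{prop:Bezout} does reduce the $2n$-variable system to the $n$-variable system $\sum_{i=1}^n\curve_j(y_i)=c_j$, $j=1,\dots,n$, so the $2n$-versus-$n$ worry is handled; but this reduction does \emph{not} keep the monomial count at $M$. If $d_1,\dots,d_M$ are the exponents appearing in $\curve$, the resulting system in $y_1,\dots,y_n$ contains the $nM$ distinct monomials $y_i^{d_k}$ (plus the constant), and Khovanskii's theorem then yields a bound with $nM$ in place of $M$, which is strictly weaker than the stated $2^{M(M-1)/2}(n+1)^M$. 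To reach the stated constant one would need either to reinterpret $M$ as the monomial count of the $n$-variable system, or to exploit the symmetric power-sum structure of the equations (say by changing to symmetric-function coordinates) to reduce the effective monomial count; this step is missing from both your plan and the paper. A lesser technicality, also unaddressed in both places, is that Khovanskii's theorem counts nondegenerate \emph{positive} solutions, whereas solutions here lie in $\roi^n=[0,1]^n$, so the boundary contribution needs a word.
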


\noindent With only a minor modification, the proofs of Theorem~\ref{theorem:Bezout} and \ref{theorem:fewnomial} generalize to non-degenerate Pfaff curves in $\R^n$. The statement is more technical than its proof, so I refer the reader to \cite{Khovanskii:transcendental, Khovanskii:fewnomials} to extract what is needed. 



In the form described above, \cite{GGPRY} proved Theorem~\ref{theorem:main} for \emph{real} non-degenerate curves; their result is far more general at the cost of an inexplicit constant. A new proof for Theorem~\ref{theorem:main} when $n=2$ was given in \cite{BBH}. 
Subsequently, \cite{WH} proved the bound $\Hconstant_\curve(W)^{2n} \leq n!$ for non-Archimedean fields of sufficiently large characteristic. 
Combined with Theorem~\ref{theorem:lower_bound}, we see that $\Hconstant_\curve(W) = (n!)^{1/2n}$ for the moment curve over non-Archimedean local fields of characteristic 0 or characteristic exceeding $n$. 

The arguments proving Theorems~\ref{theorem:main} and \ref{theorem:lower_bound} are closely related to the starting point for Vinogradov's celebrated mean value theorems. While these arguments should be well-known, they seem to be known to only a few. 
After consulting with experts, I believe the proofs of Theorems~\ref{theorem:Bezout} and \ref{theorem:fewnomial} appear to be new. 
I now give a concise overview of some antecedents to these results. 

Historically, the main interest in such square function estimates arises from exponential sum estimates in analytic number theory; specifically, Vinogradov's mean value theorems and Waring's problem where the main choice of functions for input into \eqref{ineq:main} were (after a rescaling) $f:=\sum_{i=1}^N \delta_{i/N}$ as $N$ tends to infinity. For these functions, the first result is due Vinogradov; see \cite[Vinogradov's `pigeon-hole lemma' on page 12]{KV} and \cite[Chapter~4, Section~2]{Hua1965}. 
Linnik subsequently developed a version of Vinogradov's methods for local fields; see \cite[pages 71-72]{Montgomery:10lectures}. 

In harmonic analysis, related results appeared later. 
A Cantor--Lebesgue theorem for the circle appeared in \cite{Cooke}, followed by a $\ell^2 \to L^4$-discrete restriction estimate for the circle in \cite{Zygmund:2variables}. 
For more general functions, \cite{GGPRY} attributes a version of Theorem~\ref{theorem:main} to \cite{F} when $n=2$ and $\lf=\R$ while \cite{WH} attributes a version of this theorem to \cite{P1, P2} when $n=3$ and $n \geq 4$ respectively; each version is for square functions related to Bochner--Riesz operators instead of the square functions herein. 

Uniting all of these works is the aforementioned paradigm from number theory. 
My use of `analysis' there does not refer to any mathematical field, but to quantitative considerations of problems related to certain equations. This paradigm has long played a salient role in number theory and is prominent in analytic number theory through Manin's conjecture and the circle method. 
In harmonic analysis, this paradigm is understated despite its long-standing presence therein. 

Let me take a moment to describe how this paradigm arises in the aforementioned works. 
In Vinogradov's pigeon-hole lemma and \cite[`Linnik's Lemma' on pages 71-72]{Montgomery:10lectures}, this paradigm arises by demonstrating diagonal behavior for certain Vinogradov systems with an added transverality assumption; over $\R$ transversality can be encoded as distinct 1-separated integers (note that Vinogradov's methods easily generalize to 1-separated real points), and over the $p$-adics, transversality is usually encoded $1/p$ separation between the $p$-adic intervals. 
In \cite{GGPRY, BBH, WH} and this paper, this paradigm appears in the diagonal behavior of isotropic boxes covering a neighborhood of the underlying variety. 
In \cite{Cooke, Zygmund:2variables} this paradigm appears in the geometry of two intersecting circles. 
In \cite{F, P1, P2}, this paradigm appears in the almost orthogonality of off-diagonal Kakeya type information. 
In all cases, this information is drawn out through a reduction using orthogonality and multilinearity considerations.



\subsection*{Outline of the paper}

In Section~\ref{section:reduction}, I reduce the proof of Theorem~\ref{theorem:main} to bounds on the number of special subvarieties via a standard use of orthogonality and the Cauchy--Schwarz inequality. This reduction is encoded in Lemma~\ref{lemma:an_uncertain_CS}. 
In Section~\ref{section:main_prop}, I prove Proposition~\ref{prop:main} which, in combination with Lemma~\ref{lemma:an_uncertain_CS}, yields Theorem~\ref{theorem:main}. At the end of this section, I prove the lower bound Theorem~\ref{theorem:lower_bound}. 
In Section~\ref{section:variants}, I prove Theorems~\ref{theorem:Bezout} and \ref{theorem:fewnomial}. 
In Section~\ref{section:SO}, I discuss the connection to Superorthogonality.

\subsection*{Acknowledgements}

I thank J. de Dios Pont, A. Mudgal, O. Robert and T. Wooley for their feedback. 

This work was supported by the Additional Funding Programme for Mathematical Sciences, delivered by EPSRC (EP/V521917/1) and the Heilbronn Institute for Mathematical Research.



\section{Scratch}
{
}

\section{Reduction to special subvarieties}\label{section:reduction}


Fix a local field $\lf$ and a curve $\curve : \roi \to \lf^n$. 
For each scale $\delta \in \scales(\lf)$, each $\epsilon>0$ and each $n$-tuple $\vof{I}$ in $\partition_{\delta}^n$, define $\syzygies(\delta,\vof{I}; \epsilon)$ to be the set of $n$-tuples $\vof{J} \in \partition_{\delta}^n$ such that 
\begin{equation}\label{close_points}
| \sum_{i=1}^n \big( \curve(t_i)-\curve(s_i) \big) | 
\leq \epsilon
\end{equation} 
for some $\vof{s} \in \vof{I}$ and some $\vof{t} \in \vof{J}$. 
In other words, 
\begin{equation*}
\syzygies(\delta,\vof{I}; \epsilon)
:= 
\big\{ \vof{J} \in \partition_{\delta}^n : \text{ there exist points } \vof{s} \in \vof{I}, \vof{t} \in \vof{J} \text{ satisfying } \eqref{close_points} \big\}
.\end{equation*}
Define $\S_{\curve} := \max_{\delta \in \scales} \max_{\vof{I} \in \partition_\delta^n} |\syzygies(\delta,\vof{I}; \delta^{n})|$.

\begin{lemma}\label{lemma:an_uncertain_CS}
Let $\lf$ be a local field, $n \geq 2$ and $\curve \in \lf[T]^n$ be a curve in $\lf^n$. 
Then 
\begin{equation}
\Hconstant_{\curve}(W) \leq (\S_{\curve})^{1/2n} 
.\end{equation}
\end{lemma}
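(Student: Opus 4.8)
The plan is to expand the $2n$-th power $\|E_\roi f\|_{L^{2n}(W_{\vof c,\delta^{-n}})}^{2n}$ as an integral of a product of $2n$ copies of the extension operator, group these into $n$ conjugate pairs, and recognize the resulting phase as $\sum_i(\curve(t_i)-\curve(s_i))\cdot\vof x$. After integrating in $\vof x$ against the weight $W_{\vof c,\delta^{-n}}$, the Fourier support condition on $\widehat W$ (it lives in the unit cube, rescaled to a cube of side $\delta^{-n}$ in frequency) forces the tuples $\vof s,\vof t$ to satisfy precisely the syzygy condition \eqref{close_points} with $\epsilon=\delta^n$; this is where the definition of $\S_\curve$ enters. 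So the first step is to write
\[
\|E_\roi f\|_{L^{2n}(W_{\vof c,\delta^{-n}})}^{2n}
= \int \prod_{i=1}^n E_\roi f(\vof x)\,\overline{E_\roi f(\vof x)}\; W_{\vof c,\delta^{-n}}(\vof x)\dd{\vof x}
= \int_{(\roi^{2n})} \prod_i f(t_i)\overline{f(s_i)}\,\widehat{W}\!\Big(\delta^n\!\sum_i(\curve(t_i)-\curve(s_i))\Big)\delta^{-n\cdot n\eta_\lf}\dd{\vof t}\dd{\vof s}
\]
up to the normalizing constants built into $W$; the key point is $\widehat{W_{\vof c,\delta^{-n}}}$ is supported where $|\delta^n\sum_i(\curve(t_i)-\curve(s_i))|\le 1$, i.e. where $|\sum_i(\curve(t_i)-\curve(s_i))|\le \delta^n$.

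Next I would decompose each of $\roi=\bigsqcup_{J\in\partition_\delta}J$, so the $2n$-fold integral becomes a sum over $(\vof I,\vof J)\in\partition_\delta^n\times\partition_\delta^n$ of the integral of $\prod_i f(t_i)\overline{f(s_i)}$ over $\vof I\times\vof J$ against the (nonnegative, by hypothesis on $\widehat W$) kernel. Only those pairs $(\vof I,\vof J)$ with $\vof J\in\syzygies(\delta,\vof I;\delta^n)$ contribute. Bounding $\widehat W$ by its sup (which is $\widehat W(\vof 0)$ or at worst an absolute constant absorbed by the $W\ge\1$ normalization on the physical side — here the choice of $W$ with $W\ge 1$ on the unit cube makes the ratio of constants $1$), I get
\[
\|E_\roi f\|_{L^{2n}(W_{\vof c,\delta^{-n}})}^{2n}
\;\lesssim\; \sum_{\vof I}\ \sum_{\substack{\vof J\in\syzygies(\delta,\vof I;\delta^n)}}\ \Big|\int_{\vof I}\prod_i f(t_i)\dd{\vof t}\Big|\cdot\Big|\int_{\vof J}\prod_i \overline{f(s_i)}\dd{\vof s}\Big|
= \sum_{\vof I}\sum_{\vof J\in\syzygies(\delta,\vof I;\delta^n)} \prod_i|E_{I_i}f|\cdot\prod_i|E_{J_i}f|,
\]
where I have abused notation slightly since the factors actually depend on which $\vof x$-free pairing is used; more precisely each $n$-fold integral factors as a product over coordinates, $\int_{\vof I}\prod f(t_i)\dd{\vof t}=\prod_{i}\int_{I_i}f$, etc. Then by Cauchy--Schwarz in the pair $(\vof I,\vof J)$ and the bound $|\syzygies|\le\S_\curve$, the sum is controlled by $\S_\curve\cdot\big(\sum_{\vof I}\prod_i|\!\int_{I_i}f|^2\big)=\S_\curve\cdot\big(\sum_{I\in\partition_\delta}|\!\int_I f|^2\big)^n=\S_\curve\,\|S_\delta f\|_{?}$ — and the final step is to re-attach the weight $W_{\vof c,\delta^{-n}}$ to turn $\big(\sum_I|E_If|^2\big)^n$ integrated against the weight into $\|S_\delta f\|_{L^{2n}(W_{\vof c,\delta^{-n}})}^{2n}$, using $\|S_\delta f\|_{L^{2n}}^{2n}=\int(\sum_I|E_If|^2)^n W_{\vof c,\delta^{-n}}$.

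I expect the main obstacle to be bookkeeping the weight $W_{\vof c,\delta^{-n}}$ carefully throughout: one must keep it attached on the physical side, use that $\widehat W$ is nonnegative and compactly supported to restrict to syzygies \emph{and} to keep the right power of the weight in play, and at the end recombine $n$ copies of $\sum_I|E_If|^2$ against one copy of the weight into the $L^{2n}(W)$ norm of $S_\delta f$ — this requires an application of Cauchy--Schwarz (or the multilinear arithmetic--geometric step) \emph{inside} the $\vof x$-integral rather than after it, so that the weight is never discarded. The normalization of $W$ (value $\ge 1$ on the cube, $\widehat W$ supported in and equal to the right constant on the cube) is exactly what makes all multiplicative constants collapse to $1$, leaving the clean bound $\Hconstant_\curve(W)\le\S_\curve^{1/2n}$.
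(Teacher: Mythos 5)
Your setup is right up to the orthogonality step: expanding the $2n$th power, decomposing $\roi$ into $\partition_\delta$, and using the compact Fourier support of $W_{\vof c,\delta^{-n}}$ to restrict the sum over $(\vof I,\vof J)$ to pairs with $\vof J\in\syzygies(\delta,\vof I;\delta^n)$ is exactly the orthogonality reduction in the paper's argument. The gap opens at the next step, when you integrate out $\vof x$ and then bound $\widehat W$ pointwise by its supremum. That move destroys the $\vof x$-dependence permanently: what survives is $\prod_i\int_{I_i}|f|\cdot\prod_j\int_{J_j}|f|$, i.e.\ a product of $L^1$ masses of $f$, not values of the extension operators $E_{I_i}f(\vof x)$. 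There is no way to ``re-attach'' the weight afterward, because $\|S_\delta f\|_{L^{2n}(W_{\vof c,\delta^{-n}})}^{2n}=\int\bigl(\sum_{J}|E_Jf(\vof x)|^2\bigr)^n W_{\vof c,\delta^{-n}}(\vof x)\dd\vof x$ is genuinely a statement about the oscillatory quantities $E_Jf(\vof x)$, which can be much smaller than $\|f\|_{L^1(J)}$; your chain of inequalities ends up bounding $\|E_\roi f\|^{2n}$ by something that only \emph{upper bounds} $\|S_\delta f\|^{2n}$ from above, which is the wrong direction.

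You flag this yourself at the end (``Cauchy--Schwarz inside the $\vof x$-integral rather than after it''), and that is indeed the fix, but it is a different argument rather than a bookkeeping repair. The paper never writes $\widehat W$ explicitly or integrates out $\vof x$: it uses the Fourier support of $W_{\vof c,\delta^{-n}}$ only to discard cross-terms in the double sum, then applies Cauchy--Schwarz to the $\vof I$-sum \emph{under the $\vof x$-integral}, producing the factor $\bigl(\sum_{\vof I}|\prod_i E_{I_i}f(\vof x)|^2\bigr)^{1/2}=S_\delta f(\vof x)^n$ pointwise, with the weight still in place. A further Cauchy--Schwarz on the inner $\vof J$-sum extracts one factor of $\S_\curve^{1/2}$, and the symmetry $\vof J\in\syzygies(\delta,\vof I;\delta^n)\Leftrightarrow\vof I\in\syzygies(\delta,\vof J;\delta^n)$ is then used to swap the roles of $\vof I$ and $\vof J$ and extract the second factor of $\S_\curve^{1/2}$ while producing a second $S_\delta f(\vof x)^n$. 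Your sketch is also missing this symmetry step, without which one does not reach the clean constant $\S_\curve$. So: the orthogonality idea is right and essentially the same as the paper's, but the proof as written does not close; you need to keep the $\vof x$-integral and the weight throughout, apply Cauchy--Schwarz pointwise in $\vof x$, and use the symmetry of the syzygy relation.
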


\begin{proof}
Fix $n \geq 2$. 
Fix a local field $\lf$.  
Fix a scale $\delta \in \scales(\lf)$. 
Let $B$ be a box of sidelengths $\delta^{-n}$ in $K^n$. Without loss of generality, it suffices to take $B$ centered at the origin. 
Write $E_{\roi} = \sum_{I \in \partition_{\delta}} E_{I}$ by the linearity of integration. Fubini's theorem implies that  
\begin{align*}
\| E_{\roi} f \|_{L^{2n}(W_B)}^{2n} 
&= 
\sum_{\vof{I} \in \partition_{\delta}^n} \sum_{\vof{J} \in \partition_{\delta}^n} \int_{\lf^n} \big( \prod_{i=1}^n E_{I_i}f(\vof{x}) \big) \big( \prod_{j=1}^n \overline{E_{J_j}f(\vof{x})} \big) W_B(\vof{x}) \dd{\vof{x}}
.\end{align*}
Fourier inversion and the properties of $W_B$ imply that 
\begin{align*}
\| E_{\roi} f \|_{L^{2n}(W_B)}^{2n} 
&= 
\sum_{\vof{I} \in \partition_{\delta}^n} \sum_{\vof{J} \in \syzygies(\delta,\vof{I};\delta^{n})} \int \big( \prod_{i=1}^n E_{I_i}f \big) \big( \prod_{j=1}^n \overline{E_{J_j}f} \big) W_B
.\end{align*}
For more details of this orthogonality, see \cite[Section~6]{GGPRY}. 

Applying Fubini's theorem once more as well as the Cauchy--Schwarz inequality on the sum over $\vof{I} \in \partition_{\delta}^n$, we deduce that 
\begin{align*}
\| E_{\roi} f \|_{L^{2n}(W_B)}^{2n} 
&= 
\int \sum_{\vof{I} \in \partition_{\delta}^n} \big( \prod_{i=1}^n E_{I_i}f \big) \big( \sum_{\vof{J} \in \syzygies(\delta,\vof{I};\delta^{n})} \prod_{j=1}^n \overline{E_{J_j}f} \big) W_B
\\&\leq 
\int \Big( \sum_{\vof{I} \in \partition_{\delta}^n} |\prod_{i=1}^n E_{I_i}f|^2 \Big)^{1/2} \Big( \sum_{\vof{I} \in \partition_{\delta}^n} |\sum_{\vof{J} \in \syzygies(\delta,\vof{I};\delta^{n})} \prod_{j=1}^n \overline{E_{J_j}f}|^{2} \Big)^{1/2} W_B
.\end{align*}
Applying the Cauchy--Schwarz inequality on the inner sum over $\vof{J} \in \syzygies(\delta,\vof{I};\delta^{n})$ and the triangle inequality on the second outer sum over $\vof{I} \in \partition_{\delta}^n$, we deduce that 
\begin{align*}
\| E_{\roi} f \|_{L^{2n}(W_B)}^{2n} 
&\leq 
\int \Big( S_{\delta}f \Big)^n \Big( \sum_{\vof{I} \in \partition_{\delta}^n} |\syzygies(\delta,\vof{I};\delta^{n})| \sum_{\vof{J} \in \syzygies(\delta,\vof{I};\delta^{n})} |\prod_{j=1}^n {E_{J_j}f}|^{2} \Big)^{1/2} W_B
\\&\leq 
\S_{\curve}^{1/2} \int \Big( S_{\delta}f \Big)^n \Big( \sum_{\vof{I} \in \partition_{\delta}^n} \sum_{\vof{J} \in \syzygies(\delta,\vof{I};\delta^{n})} |\prod_{j=1}^n {E_{J_j}f}|^{2} \Big)^{1/2} W_B
.\end{align*}

By the symmetry of the inequalities \eqref{close_points}: $\vof{J} \in \syzygies(\delta,\vof{I};\delta^{n})$ if and only if $\vof{I} \in \syzygies(\delta,\vof{J};\delta^{n})$. 
Use this symmetry to invert the double sum and apply the triangle inequality on the innermost summand over $\vof{I} \in \syzygies(\delta,\vof{J};\delta^{n})$ to deduce that 
\begin{align*}
\| E_{\roi} f \|_{L^{2n}(W_B)}^{2n} 
&\leq 
\S_{\curve}^{1/2} \int \Big( S_{\delta}f \Big)^n \Big( \sum_{\vof{J} \in \partition_{\delta}^n} \sum_{\vof{I} \in \syzygies(\delta,\vof{J};\delta^{n})} |\prod_{j=1}^n {E_{J_j}f}|^{2} \Big)^{1/2} W_B
\\&\leq 
\S_{\curve} \int \Big( S_{\delta}f \Big)^n \Big( \sum_{\vof{J} \in \partition_{\delta}^n} |\prod_{j=1}^n {E_{J_j}f}|^{2} \Big)^{1/2} W_B
= 
\S_{\curve} \|S_{\delta}f\|_{L^{2n}(W_B)}^{2n} 
.\end{align*}
Taking $2n^{\textrm{th}}$-roots on both sides of the inequality completes the proof.
\end{proof}

\section{Diagonal behavior of the Vinogradov system}\label{section:main_prop}

Our main estimate uniformly bounds the size of $\syzygies(\delta,\vof{I}; \delta^{n})$ which implies that $\S_{\curve}$ is finite. 
Via Lemma~\ref{lemma:an_uncertain_CS}, our particular bound for the cardinality of $\syzygies(\delta,\vof{I}; \delta^{n})$ immediately implies Theorem~\ref{theorem:main}; I leave this implication to the reader. 
\begin{proposition}\label{prop:main}
Let $\curve := (T,T^2,\dots,T^n) $ be the moment curve in a fixed local field $\lf$ of characteristic 0 or greater than $n$. 
We have the bound 
\begin{equation}\label{bound:syzygies}
\S_{\curve} \leq (C_{\lf} \cdot n)^n
.\end{equation}
\end{proposition}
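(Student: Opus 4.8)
The plan is to bound $|\syzygies(\delta,\vof{I};\delta^n)|$ uniformly in $\delta$ and $\vof{I}$, and the key is that condition \eqref{close_points} for the moment curve is an \emph{approximate} version of the Vinogradov system. First I would write each interval $J_j$ of $\partition_\delta$ as centered at some point $y_j$, and similarly $I_i$ centered at $x_i$; the inequality $|\sum_i(\curve(t_i)-\curve(s_i))| \leq \delta^n$ with $\vof{s}\in\vof{I}$, $\vof{t}\in\vof{J}$ becomes, after expanding $\curve(T) = (T,\dots,T^n)$ and using that $|t_i - y_i|,|s_i - x_i| \leq \delta$, a system of inequalities $|\sum_{j=1}^n y_j^m - \sum_{i=1}^n x_i^m| \lesssim \delta$ for each $m=1,\dots,n$ (the error from replacing $t_i$ by $y_i$ in the degree-$m$ power sum is $O(\delta)$ since we are on the compact set $\roi$ and the characteristic hypothesis keeps binomial coefficients from vanishing — really one should track the implied constants carefully here to land the explicit $C_{\lf,n}$). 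So the right-hand sides $\vof{x}\in\vof{I}$ are fixed and we are counting lattice-like configurations $\vof{y}$ of centers of $\delta$-balls whose power sums $p_m(\vof{y}) = \sum_j y_j^m$ are all within $O(\delta)$ of the fixed values $p_m(\vof{x})$.

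The crucial step — and the reason for the title's Girard--Newton formulae — is to pass from power sums to elementary symmetric functions. By the Newton identities, knowing $p_1(\vof{y}),\dots,p_n(\vof{y})$ determines $e_1(\vof{y}),\dots,e_n(\vof{y})$ (the elementary symmetric polynomials), and this transformation is a polynomial map with coefficients that are bounded rationals whose denominators divide $n!$ — hence invertible over $\lf$ precisely because $\mathrm{char}(\lf) = 0$ or $> n$. Since the $y_j$ lie in $\roi$, the map $\vof{y}\mapsto(p_1,\dots,p_n)$ is Lipschitz with a controlled constant, and so is $\vof{y}\mapsto(e_1,\dots,e_n)$. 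Therefore an $O(\delta)$-neighborhood in power-sum space pulls back to an $O(\delta)$-neighborhood in $e$-space. But the $e_m(\vof{y})$ are, up to sign, the coefficients of the monic polynomial $\prod_{j=1}^n(Z - y_j)$, so the $\delta$-ball $J_j$ is essentially determined by which root of a polynomial lying within $O(\delta)$ of the fixed polynomial $\prod_j(Z-x_j)$ it contains. The number of such polynomials, once we discretize the coefficient space to scale $\delta$, is $O(1)$ per coefficient, i.e. $O(1)^n$; and each polynomial has exactly $n$ roots, giving at most $n$ choices for each $y_j$ but really the unordered tuple $\{y_1,\dots,y_n\}$ is pinned down up to $O(1)$ ambiguity, so the number of ordered tuples $\vof{J}$ is $\leq (\text{const})^n \cdot n^n$, which is the claimed shape $C_{\lf,n} n^n$.

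To nail the explicit constants $C_{\lf,n}$ I would be careful about two places. Over a non-Archimedean field the ultrametric inequality makes all the error terms collapse and the coefficient-space discretization is exact, giving $C_{\lf,n} = 1$: the neighborhood of the Vinogradov variety at scale $\delta^n$ intersected with the $\delta$-grid is literally the diagonal (up to permutation), so $\syzygies$ has size exactly $n!$ or at worst $n^n$. Over $\R$ and $\C$ one loses a bounded multiplicative factor at each of the $n$ coordinates — from the triangle-inequality slack in passing $t_i\to y_i$, from the Lipschitz constant of the Newton map on $\roi$, and from counting $\delta$-separated points in an $O(\delta)$-ball (a factor like $3$ or $5$ per coordinate in one real dimension, squared over $\C$) — and choosing these factors sharply across the ranges $2\le n\le 6$ and $n\ge 7$ is exactly what produces $7^n$ versus $5^n$ (resp. $7^{2n}, 5^{2n}$).

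The main obstacle is the bookkeeping of constants: the \emph{structure} of the argument — approximate Vinogradov system $\Rightarrow$ Newton's identities $\Rightarrow$ bounded coefficient perturbation $\Rightarrow$ bounded number of polynomials $\Rightarrow$ $O(n^n)$ root-tuples — is clean and forced, but to get the stated $C_{\lf,n}$ rather than some unspecified constant one must track the Lipschitz constant of the power-sum map on $\roi^n$, the norm of the inverse Newton transformation (controlled by $n!$ but one wants a better bound), and the covering numbers, and verify the stated numerology. I would first prove the qualitative statement $\S_\curve \leq C(n) < \infty$ cleanly, then in a second pass optimize each inequality to extract the explicit $C_{\lf,n}$; the characteristic hypothesis enters exactly once, ensuring the Newton map is invertible.
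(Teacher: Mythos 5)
Your high-level road map — pass from power sums to elementary symmetric functions via Girard--Newton (using the characteristic hypothesis to make the Newton map invertible), interpret the elementary symmetric functions as coefficients of a monic polynomial, and count — is the right one and matches the paper. But there is a genuine scale-loss in the way you set it up that breaks the count and cannot be repaired by constant-tracking.

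The loss comes from replacing the actual points $t_i \in J_i$, $s_i \in I_i$ by the centers $y_i$, $x_i$ at the very first step. The defining inequality for $\syzygies(\delta,\vof{I};\delta^n)$ controls the power-sum differences at scale $\delta^n$, and the whole argument hinges on keeping that scale. Once you replace each $t_i$ by its center you incur an error of order $\delta$ in each power sum, so you only retain $|p_m(\vof{y}) - p_m(\vof{x})| \lesssim \delta$ (as you note), and hence $|e_m(\vof{y}) - e_m(\vof{x})| \lesssim \delta$. From there you cannot conclude that each $y_j$ is within $O(\delta)$ of some $x_i$: coefficient-to-root stability for a degree-$n$ monic polynomial is H\"older-$1/n$, not Lipschitz, so an $O(\delta)$ perturbation of the coefficient vector only pins the roots to precision $O(\delta^{1/n})$. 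The sentence ``the unordered tuple $\{y_1,\dots,y_n\}$ is pinned down up to $O(1)$ ambiguity'' is precisely the unjustified step. A concrete counterexample at the weakened scale: with $n=2$, $x_1 = x_2 = 0$ and $y_i = m_i\delta$, the constraints $|y_1+y_2| \lesssim \delta$, $|y_1 y_2| \lesssim \delta$ allow $m_1$ to range over $O(\delta^{-1/2})$ values (take $m_2 \approx -m_1$, so $|y_1y_2|\approx m_1^2\delta^2$), so the count of center-tuples satisfying your relaxed system is unbounded as $\delta\to 0$. The original condition with $\delta^n = \delta^2$ on the right-hand side does restrict to $O(1)$ tuples, which is exactly what is lost in the replacement.

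The paper's proof avoids this by never passing to centers. It keeps the points $\vof{s}\in\vof{I}$, $\vof{t}\in\vof{J}$ and deduces, via Girard--Newton, that the elementary symmetric polynomials of $\vof{t}$ and $\vof{s}$ agree to within $C\delta^n$, hence that $G(\vof{t};x) - G(\vof{s};x)$ is $O(\delta^n)$ uniformly for $x\in\roi$, where $G(\vof{s};X) = \prod_i(X - s_i)$. The key move you are missing is then to \emph{evaluate at} $x = t_j$: since $G(\vof{t};t_j)=0$, this gives $\prod_i |t_j - s_i| \leq C\delta^n$, and now the pigeonhole is applied to an $n$-fold \emph{product} bounded by $\delta^n$, yielding a single factor $|t_j - s_i| \leq C^{1/n}\delta$. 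That is a genuine $\delta$-scale conclusion, and it comes precisely from feeding the full $\delta^n$ bound into an $n$-fold product. In the non-Archimedean case the ultrametric then forces $t_j\in I_i$ and hence $J_j = I_i$, giving $n^n$ exactly; over $\R$ and $\C$ one pays the neighbor/constant factors you anticipated. So the fix for your argument is structural, not bookkeeping: work with the points themselves, keep the $\delta^n$, and extract the diagonal constraint by evaluating the monic polynomial $G(\vof{s};\cdot)$ at the components of $\vof{t}$ rather than by attempting coefficient-to-root Lipschitz stability.
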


See \cite[Proposition~1.3]{GGPRY}, \cite[Proposition~3.1]{BBH} and \cite[Proposition~1.2]{WH} for analogous propositions. 
Compare with classical versions of this proposition such as its original form due to Vinogradov exposed in \cite[Vinogradov's `pigeon-hole lemma' on page 12]{KV}, \cite[pages 37--44]{Vin:book}, \cite[Chapter~4, Section~2]{Hua1965} and \cite[Lemma~6.3 in Chapter~VI on pages 121-122]{Titchmarsh}. 
Linnik developed a $p$-adic version exposed in \cite[pages 58-60]{Vaughan:book} and \cite[pages 71-72]{Montgomery:10lectures}; these use an added transversality assumption between the intervals which we crucially do not have at our disposal.

Let me take a moment to describe the idea underlying Proposition~\ref{prop:main}. 
Suppose that $\vof{s}$ and $\vof{t}$ are points in $\lf^n$. 
If $\vof{t}$ is a permutation of $\vof{s}$, then this pair of points is a solution to \eqref{close_points} with $\epsilon=0$. 
It transpires that the converse is true. 
To see this converse, define the elementary symmetric polynomials $\sigma_j(X_1,\dots,X_n)$ to be $\sum_{i_1<i_2<\cdots<i_j} X_{i_1} \cdots X_{i_j}$ for $j\in\N$ and recall the Girard--Newton equations: 
\begin{equation}\label{identity:GN}
(-1)^{j-1}j \sigma_j(X_1,\dots,X_n) = \sum_{i=0}^{j-1} (-1)^i \big( X_1^{j-i}+\cdots+X_n^{j-i} \big) \sigma_i(X_1,\dots,X_n) 
.\end{equation} 
The Girard--Newton equations imply that for each coordinate $t_j$, there exists an $s_i=t_j$. 
The special subvarieties discussed previously are these `almost diagonal' varieties given by $t_j=s_i$ for each $j$ and some $i$. 
Fixing a point $\vof{s}$, the number of such points $\vof{t}$ (and equivalently special subvarieties) is at most $n^n$. 
The critical insight underlying Proposition~\ref{prop:main} is that a fattened version of this argument continues to hold for $\syzygies(\delta,\vof{I};\delta^{n})$. 

Going further, the Girard--Newton equations \eqref{identity:GN} and the Fundamental Theorem of Algebra imply that permutations are the only solutions. Obtaining a fattened version of this `strong diagonal property' was an essential feature in \cite{WH}; the strong diagonal property took the form: if $\vof{J} \in \syzygies(\delta,\vof{I},\delta^n)$, then $\vof{J}$ is a permutation of $\vof{I}$. 
We will see this strong diagonal behavior again in Proposition~\ref{prop:Bezout} below. 

\begin{proof}[Proof of Proposition~\ref{prop:main}]
Fix $n \geq 2$. 
First, I will prove the proposition for non-archimedean local fields of characteristic 0 or of characteristic greater than $n$. For these cases, the constant $C_{\lf}$ appearing in statement of the proposition is 1. 
At the end of the proof, I will indicate the necessary changes when $\lf$ is $\R$ or $\C$. 

Fix a non-archimedean local field $\lf$ of characteristic 0 or of characteristic greater than $n$. 
Fix a scale $\delta \in \scales(\lf)$ and an $n$-tuple of intervals $\vof{I}$ in $\partition_{\delta}^n$. 
It suffices to show the bound 
\(|\syzygies(\delta,\vof{I}; \delta^{n})| 
\leq 
n^n\). 
For $\vof{s} \in \lf^n$, define the univariate polynomial 
\[
G(\vof{s};X) := \prod_{i=1}^n (X-s_i)
\] 
where $X$ is the variable. 
If $\vof{s} \in \vof{I}$, $\vof{J} \in \syzygies(\delta,\vof{I}; \delta^{n})$ and $\vof{t} \in \vof{J}$, then the Girard--Newton equations \eqref{identity:GN} imply that \eqref{close_points} holds, for $\epsilon=\delta^n$, with the elementary symmetric polynomials in place of the power symmetric polynomials. In turn, this implies 
\begin{equation}\label{inequality:GN}
|G(\vof{t};x) - G(\vof{s};x)| \leq \delta^{n} 
\text{ for all } x \in \roi
.\end{equation}
Taking $x = t_j$, we find that 
\(|G(\vof{s};t_j)| \leq \delta^{n}\) 
for each $j=1,\dots,n$. 
The pigeonhole principle implies that there exists an $i=1,\dots,n$ such that $|s_i-t_j| \leq \delta$. 
Since the local field $\lf$ is non-archimedean, $t_j \in I_i$ where $\vof{I}$ is written as $(I_1,\dots,I_n)$, and as a result, $J_j = I_i$. 
It is now obvious that there are at most $n$ choices for each of the $n$ coordinates. Therefore, there are at most $n^n$ choices overall.

Now assume that $\lf$ is $\R$ or $\C$. The first difference in the proof is that the transfer from power symmetric polynomials to elementary symmetric polynomials induces a loss of a factor at most $2n^2$ in \eqref{inequality:GN}. 
To see this, observe that \eqref{identity:GN} implies
\[
|j\sigma_j(s_1,\dots,s_n) - j\sigma_j(t_1,\dots,t_n)| 
\leq 2n\sum_{i=0}^{j-1} \big| \sigma_i(s_1,\dots,s_n)-\sigma_i(t_1,\dots,t_n) \big| 
.\]
This implies that for each $j=1,\dots,n$, we have the bound
\[
|\sigma_j(s_1,\dots,s_n) - \sigma_j(t_1,\dots,t_n)| 
\leq 2n \delta^n
.\] 
Consequently, for all $n \geq 2$, we have 
\begin{equation*}
|G(\vof{t};x) - G(\vof{s};x)| 
\leq 2n^2 \delta^n
\leq (3\delta)^{n} 
\text{ for all } x \in \roi
.\end{equation*}
The second difference arises from the necessity to account for possible neighbors of intervals arising from the above inequality. This loses a factor of $(3\cdot2+1)^n=7^n$ or $(3\cdot2+1)^{2n}=49^n$ in the estimate for $|\syzygies(\delta,\vof{I};\delta^{n})|$ for $\R$ and $\C$ respectively. 
The remaining details of the proof for Archimedean fields are left to the reader. 
\end{proof}

%

There is a small analytic improvement for Archimedean fields. 
For $n \geq 7$, we can improve $2n^2 < 3^n$ to $2n^2 < 2^n$. Therefore, when $n \geq 7$, we have a factor of $(2\cdot2+1)^n=5^n$ or $(2\cdot2+1)^{2n}=25^n$ in the above estimate for $|\syzygies(\delta,\vof{I};\delta^{n})|$ for $\R$ and $\C$ respectively. 
For any $\rho \in (1,2)$, we have $2n^2 < \rho^n$ for sufficiently large $n$ and similar improvements can be made. Asymptotically as the dimension tends to infinity, this improves the factors $C_\R$ towards $3$ and $C_\C$ towards $3^{2}$. 

\begin{prosexei}\label{remark:mistake}
A common mistake is the following: For an $n$-tuple of intervals $\vof{I} \in \partition_{\delta}^n$, there are exactly $n!$ permutations of $\vof{I}$. This is not true because there are fewer than $n!$ permutations when the same interval may appears more than once in the $n$-tuple. The effect of this is that \emph{one does not deduce} for each function $f$ the equality
\begin{equation*}
\| E_{\roi} f \|_{L^{2n}(W_B)} 
= (n!)^{1/(2n)}
\| S_{\delta} f \|_{L^{2n}(W_B)} 
\end{equation*}
in non-Archimedean fields; instead, only the upper bound (that is, $\leq$) is deduced. 
\end{prosexei}

Despite this, that upper bound is sharp. This brings us to the proof of Theorem~\ref{theorem:lower_bound}. 
\begin{proof}[Proof of Theorem~\ref{theorem:lower_bound}]
By mollifying the functions $f := \sum_{i=1}^{N} \delta_{i/N}$ for appropriate, large $N \in \N$, a lower bound for $\Hconstant_{\curve}$ relates to counting solutions to the system of equations $\sum_{i=1}^n \big( \curve(t_i)-\curve(s_i) \big) = 0$ where $s_1, t_1, \dots, s_n, t_n \in \{1/N, 2/N, \dots, N/N\}$. To be precise, $\| E_{\roi} f \|_{L^{2n}(\translate_{\vof{c}}W_{\delta^{-n}})}^{2n}$ counts the number of such solutions which is $n!N^n +O(N^{n-1})$. One see this by fixing one set of variables, say $\vof{s}$, and observing that any permutation $\vof{t}$ of $\vof{s}$ is also a solution. Additionally, there are strictly less than $n! N^n$ solutions by the reasoning in Remark~\ref{remark:mistake}. Meanwhile, $\| S_{N^{-1}} f \|_{L^{2n}(\translate_\vof{c}w)}^{2n}$ is simply the diagonal contribution $N^n$. (I have omitted a factor of $\int W_N$ which appears in calculating both $L^{2n}$-norms.) 
Taking the limit as $N$ goes to infinity, we see that $\Hconstant_{\curve} \geq (n!)^{1/2n}$. 
\end{proof}

\section{Special subvarieties underlying Theorems~\ref{theorem:Bezout} and \ref{theorem:fewnomial}}\label{section:variants}

In this section, I indicate how to prove Theorems~\ref{theorem:Bezout} and \ref{theorem:fewnomial}. 
The essential point is that, for non-degenerate curves in Archimedean fields, the Implicit Function Theorem allows us to upgrade uniform bounds on counting estimates to fat estimates as in Proposition~\ref{prop:main}. For the uniform bounds on counting estimates, we use Bezout's theorem and improvements to it. 

Using Lemma~\ref{lemma:an_uncertain_CS}, the following proposition immediately implies Theorem~\ref{theorem:Bezout}. 
\begin{proposition}\label{prop:Bezout}
Let $\curve := (\curve_1,\dots,\curve_n) $ be a non-degenerate, polynomial curve in $\lf = \R$ or $\C$. 
We have the bound
\begin{equation}\label{bound:syzygies:Bezout}
\S_{\curve} 
\leq 
\big( 2\lceil \ell(\curve) \rceil +1 \big)^{n\cdot \eta_{\lf}} \prod_{i=1}^{n} \deg(\curve_i)
.\end{equation}
\end{proposition}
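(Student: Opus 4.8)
The plan is to bound $|\syzygies(\delta,\vof{I};\delta^n)|$ uniformly over scales $\delta$ and tuples $\vof{I} \in \partition_\delta^n$, mimicking the structure of the proof of Proposition~\ref{prop:main} but replacing the Fundamental Theorem of Algebra / pigeonhole step with Bezout's theorem and the Implicit Function Theorem. Fix $\vof{s} \in \vof{I}$ and suppose $\vof{J} \in \syzygies(\delta,\vof{I};\delta^n)$, witnessed by $\vof{t} \in \vof{J}$ with $|\sum_i(\curve(t_i)-\curve(s_i))| \leq \delta^n$. The first step is to recognize that this is $n$ scalar inequalities, one for each coordinate: $|\sum_i (\curve_\ell(t_i)-\curve_\ell(s_i))| \leq \delta^n$ for $\ell = 1,\dots,n$. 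This says $\vof{t}$ lies within $\delta^n$ (coordinatewise) of the fiber $\Phi^{-1}(\Phi(\vof{s}))$, where $\Phi(\vof{x}) := \sum_i \curve(x_i)$ — a polynomial map $\roi^n \to \lf^n$ whose component $\ell$ has degree $\deg(\curve_\ell)$ in each block of variables.

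Next I would pass to the ``exact'' count: the number of solutions $\vof{t} \in \roi^n$ to $\Phi(\vof{t}) = \vof{w}$ for a fixed generic value $\vof{w}$. Because $\Phi$ is symmetric in $t_1,\dots,t_n$, it factors through the elementary symmetric functions, exactly as in the moment-curve case; equivalently, after the Girard--Newton change of variables the zero-dimensional fiber over a generic point is cut out by $n$ polynomials of degrees $\deg(\curve_1),\dots,\deg(\curve_n)$ in the symmetric variables, so Bezout's theorem caps the number of such points by $\prod_{i=1}^n \deg(\curve_i)$. Non-degeneracy of $\curve$ is what guarantees the fiber is genuinely $0$-dimensional (the Jacobian of $\Phi$ is generically nonsingular — this is essentially the Wronskian/Vandermonde non-vanishing), so Bezout applies and, via the Implicit Function Theorem, each solution $\vof{t}$ of $\Phi(\vof{t}) = \vof{w}$ varies smoothly with $\vof{w}$ on a neighborhood. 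This is the mechanism that upgrades the exact count to a ``fattened'' count: the $\delta^n$-neighborhood of the fiber $\Phi^{-1}(\Phi(\vof{s}))$ is, for $\delta$ small, contained in a union of $\prod \deg(\curve_i)$ tubes of radius comparable to $\delta^n \cdot \|(\mathrm{D}\Phi)^{-1}\|$ around the finitely many solution points, and one checks (using $\ell(\curve)$ to control the size of $\mathrm{D}\Phi$ from above and non-degeneracy to control it from below, hence to control $\|(\mathrm{D}\Phi)^{-1}\|$) that this neighborhood meets at most $(2\lceil \ell(\curve)\rceil + 1)^{\eta_\lf}$ of the $\delta$-boxes $J_j$ in each coordinate. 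Taking the product over the $n$ coordinates and over the $\prod \deg(\curve_i)$ branches yields the bound $\S_\curve \leq (2\lceil \ell(\curve)\rceil + 1)^{n\eta_\lf}\prod_{i=1}^n \deg(\curve_i)$.

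The main obstacle is making the Implicit Function Theorem step \emph{uniform in $\delta$} and in $\vof{I}$: a naive application gives, for each fixed $\vof{s}$, some neighborhood on which the branches are graphs, but the size of that neighborhood must be bounded below independently of $\vof{s} \in \roi^n$ and the output tube radius must shrink like $O(\delta^n)$ rather than just $o(1)$. This requires a quantitative, effective version — one wants uniform lower bounds on the relevant Jacobian minors over the compact set $\roi^n$ away from the (lower-dimensional) degeneracy locus, together with a careful argument handling solution points $\vof{t}$ that are near the boundary of $\roi^n$ or near the bad locus where branches collide. I expect the cleanest route is to argue at the level of the symmetric functions: the map from $(s_1,\dots,s_n)$ to $(\sigma_1,\dots,\sigma_n)$ composed with the ``power sums of $\curve$'' is a polynomial map of bounded degree, and one can quote an effective Bezout-plus-IFT statement (or a Łojasiewicz-type inequality) to get the tube bound with constants depending only on $\deg(\curve_i)$ and $\ell(\curve)$; the factor $2\lceil \ell(\curve)\rceil + 1$ then records precisely how many radius-$\delta$ intervals a set of diameter $O(\ell(\curve)\delta)$ (the image of a $\delta$-box under a map with Lipschitz constant $\ell(\curve)$, or its preimage) can intersect, with the exponent $\eta_\lf$ accounting for real versus complex (two real dimensions) counting. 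Once this uniform fattening lemma is in hand, the rest is bookkeeping, and Lemma~\ref{lemma:an_uncertain_CS} converts the bound on $\S_\curve$ into Theorem~\ref{theorem:Bezout}.
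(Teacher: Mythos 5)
Your proposal follows essentially the same route as the paper: Bezout's theorem to bound the exact count, the Implicit Function Theorem to upgrade it to a ``fattened'' count, and the Lipschitz constant $\ell(\curve)$ to control how many $\delta$-boxes the fattened branches can hit, with $\eta_\lf$ recording the real-versus-complex dimension count. Two remarks on the differences. First, the Girard--Newton change of variables to the elementary symmetric functions is an unnecessary detour here, and it is not obviously safe: the map $\vof{x} \mapsto (\sigma_1,\dots,\sigma_n)$ is generically $n!$-to-one, so a count of $\prod_i \deg(\curve_i)$ solutions \emph{in $\sigma$-space} would in general lift to as many as $n!\prod_i\deg(\curve_i)$ tuples $\vof{y}$, which is worse than the target. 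The paper instead applies Bezout directly to the system $\sum_i \curve_\ell(y_i) = \sum_i \curve_\ell(x_i)$, $\ell=1,\dots,n$, viewed in the original variables: each equation has degree $\deg(\curve_\ell)$, the intersection is zero-dimensional because there are $n$ equations in $n$ unknowns and the curve is non-degenerate (so the Wronskian/Vandermonde bound \eqref{ineq:NDtoNS} gives transversality and rules out positive-dimensional components), and Bezout immediately gives $\prod_i\deg(\curve_i)$ points. Second, your proposal does not address the case where $\vof{I}$ contains repeated intervals, where the Vandermonde degenerates; the paper treats this separately by passing to a subsystem of the curve whose Wronskian is still non-vanishing on $\roi$ and running the same Bezout/IFT argument on that subsystem. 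You are right to flag that the IFT step must be made uniform in $\delta$ and $\vof{I}$; the paper handles this by combining the local IFT (which gives branches varying continuously with $\vof{x}$) with connectedness of $\R$ or $\C$ to prevent the branches from jumping between components as $\vof{x}$ sweeps through $\vof{I}$, and by observing that the wander of a branch over an $\vof{x}$-box of side $\delta$ is bounded by $\ell(\curve)\delta$ per coordinate, hence at most $\lceil\ell(\curve)\rceil$ neighboring boxes on each side. Your proposed Łojasiewicz-style tube estimate could plausibly be made to work as an alternative, but it is heavier machinery than what the paper uses, and the bookkeeping of $\|(\mathrm{D}\Phi)^{-1}\|$ near the diagonal (where coordinates of $\vof{y}$ approach one another) needs care that the connectedness argument sidesteps.
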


\begin{proof}[Proof of Proposition~\ref{prop:Bezout}]
Let $\lf$ be $\R$ or $\C$. 
Fix $n \in \N$ to be two or more, fix $\delta \in \scales(\lf)$ and fix $\vof{I} \in \partition_{\delta}^n$. 
Suppose for the moment that $\vof{I}$ is comprised of $n$ distinct intervals. 
The Wronskian $\det\big(\curve'(t) \; \curve''(t) \; \cdots \; \curve^{(n)}(t) \big)$\footnote{Here and below, I am considering $\curve$ as a column vector.} does not vanish for all $t \in \roi$. Consequently, the determinant 
\begin{equation}\label{ineq:NDtoNS}
|\det\big(\curve'(x_1) \; \curve'(x_2) \; \cdots \; \curve'(x_n) \big)| 
\gtrsim \prod_{1 \leq i<j \leq n} |x_i-x_j| 
\end{equation} is non-zero for all $\vof{x} \in \vof{I}$; it can become arbitrarily small if two intervals of $\vof{I}$ are adjacent, but this is not an issue for us. 

Suppose that $\syzygies(\delta,\vof{I};\delta^{n})$ is not empty so that there exists $\vof{x} \in \vof{I}$ and $\vof{y} \in \roi^n$ and $\vof{h} \in \lf^n$ satisfying \eqref{close_points} with $\epsilon = \delta^n$ and 
\[
\sum_{i=1}^{n} \big( \gamma_j(x_i)-\gamma_j(y_i) \big) = h_j 
\quad \text{for} \quad j = 1,\dots,n
.\]
Thus, $|h_j| \leq \delta^n$ for $j = 1,\dots,n$. By a small perturbation, we may assume the strict inequality $|h_j| < \delta^n$ for $j = 1,\dots,n$. 
By Bezout's theorem there are at most $\prod_{i=1}^{n} \deg(\curve_i)$ possibilities for $\vof{y}$.\footnote{Technically, Bezout's theorem gives that there are at most $\prod_{i=1}^{n} \deg(\curve_i)$ non-singular components. There are no singular components since the curve is non-degenerate. Since there are $n$ equations, this means the components are 0-dimensional. In other words, they are points.} 
Using the nonsingularity estimate \eqref{ineq:NDtoNS}, the {Implicit Function Theorem} implies that for any such $\vof{y}$, there exists an open set $U_{\vof{y}}$ in $\lf^n$ upon which \eqref{close_points} holds for some $\epsilon > 0$.  The side-lengths of $U_{\vof{y}}$ are $\leq \ell(\curve) \delta$. 
Since $\R$ and $\C$ are connected, this forces $\vof{J}$ to be one of the $\leq \prod_{i=1}^{n} \deg(\curve_i)$ possible intervals containing $\vof{y}$ or one of each such interval's $\lceil \ell(\curve) \rceil$ neighbors to the left or right (and up or down when $\lf=\C$) for a total of at most $\big( 2\lceil \ell(\curve) \rceil +1 \big)^{n\eta_{\lf}} \prod_{i=1}^{n} \deg(\curve_i)$ possibilities. 

The cases where $\vof{I}$ is not comprised of distinct intervals is proved similarly. The key difference here is to use the fact that a subsystem of the curve has non-vanishing Wronskian on $\roi$. This is sufficient to apply Bezout's theorem to deduce that there are at most $\prod \deg(\curve_i)$ possibilities for each point $\vof{y} \in \vof{J}$. Once again, the Implicit Function Theorem and Lipschitz bound forces these and their neighbors to persist at the fattened level. 
\end{proof}


\begin{remark}
Although Bezout's theorem and the Implicit Function Theorem are true over local fields, the above argument is restricted to Archimedean fields $\R$ and $\C$ for two reasons: The first reason is that, in non-Archimedean fields, I do not know if non-vanishing of the Wronskian (that is, non-degeneracy of the curve) implies non-singularity of the curve like in \eqref{ineq:NDtoNS}. 
This first reason is not an issue for the moment curve. 
The second reason is that, for Archimedean fields, I use analytic continuation when applying the Inverse Function Theorem to stop the open sets $U_{\vof{y}}$ from jumping around as $\vof{y}$ varies. This relies on the connectedness of $\R$ and $\C$, but non-Archimedean fields are totally disconnected :/ 
The second reason is an obstruction for this method to apply to the moment curve. This obstruction is overcome in \cite{WH} by making delightful use of how polynomial roots cluster. 
\end{remark}

Using Lemma~\ref{lemma:an_uncertain_CS}, the following proposition immediately implies Theorem~\ref{theorem:fewnomial}. 
\begin{proposition}\label{prop:fewnomial}
Let $\curve$ be a non-degenerate, polynomial curve in $\R^n$ such that the total number of monomials appearing in $\curve$ is $M$. 
\begin{equation}\label{bound:syzygies:fewnomial}
\S_{\curve} 
\leq \big( 2\lceil \ell(\curve) \rceil +1 \big)^{n} 2^{M(M-1)/2} (n+1)^{M}
.\end{equation}
\end{proposition}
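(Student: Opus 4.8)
The proof will follow the same template as Proposition~\ref{prop:Bezout}, replacing the crude Bezout bound $\prod_i \deg(\curve_i)$ by Khovanskii's fewnomial bound. First I reduce, exactly as in the proof of Proposition~\ref{prop:Bezout}, to the following counting statement: fix $\vof{x} \in \vof{I}$ with $\vof{I}$ comprised of $n$ distinct intervals, and count the number of $\vof{y} \in \R^n$ solving the system $\sum_{i=1}^n \big(\curve(y_i)-\curve(x_i)\big) = 0$, i.e. the $n$ equations $\sum_{i=1}^n \curve_k(y_i) = \sum_{i=1}^n \curve_k(x_i)$, $k=1,\dots,n$. Once I have a uniform upper bound $N$ on the number of nondegenerate solutions of this system, the Implicit Function Theorem together with the Wronskian nonsingularity \eqref{ineq:NDtoNS} and the Lipschitz bound $\ell(\curve)$ promotes each solution and its (at most $\lceil \ell(\curve)\rceil$ on each side, in each of the $n$ coordinates) lattice neighbors to the fattened count, giving $|\syzygies(\delta,\vof{I};\delta^n)| \leq (2\lceil\ell(\curve)\rceil+1)^n N$, and the non-distinct case is handled by passing to a subsystem with non-vanishing Wronskian as before.

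\textbf{The heart: the fewnomial count.} The key point is to bound $N$ by $2^{M(M-1)/2}(n+1)^M$. Write the distinct monomials appearing across all the $\curve_k$ as $T^{a_1},\dots,T^{a_M}$ (so $\curve_k(T) = \sum_{\ell=1}^M c_{k,\ell} T^{a_\ell}$). The system of $n$ equations in the $n$ unknowns $y_1,\dots,y_n$ is then built from the $M$ \emph{power sums} $p_\ell(\vof{y}) := \sum_{i=1}^n y_i^{a_\ell}$, $\ell=1,\dots,M$: indeed the $k$-th equation reads $\sum_{\ell} c_{k,\ell} p_\ell(\vof{y}) = \sum_\ell c_{k,\ell} p_\ell(\vof{x})$. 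The monomials $y_i^{a_\ell}$ occurring in $p_1,\dots,p_M$ are, over all $i$ and $\ell$, a set of at most $nM$ distinct monomials in $n$ variables; but a sharper accounting is available. By Khovanskii's theorem on fewnomials (see \cite{Khovanskii:fewnomials}), a system of $n$ polynomial equations in $n$ real variables involving a total of $q$ distinct monomials (beyond the constant) has at most $2^{q(q-1)/2}(n+1)^q$ nondegenerate solutions in the positive orthant, and a standard sign-decomposition argument extends this to all of $\R^n$ at the cost of a factor $2^n$ that I will absorb into the neighbor count — or, better, one works directly with the substitution $y_i = \pm e^{u_i}$ on each orthant. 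The crucial combinatorial observation is that, after the linear change of coordinates on the target replacing the $n$ equations $\curve_k$ by the $M$ power-sum equations $p_\ell(\vof{y}) = p_\ell(\vof{x})$ (which does not increase the solution set), the effective number of monomials governing the complexity is $M$, not $nM$: all the $p_\ell$ share the same $n$ underlying variables, and Khovanskii's bound is stated in terms of the dimension of the space spanned by the exponent vectors of the monomials, which here is controlled by $M$. This yields $N \leq 2^{M(M-1)/2}(n+1)^M$.

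\textbf{Main obstacle.} The delicate step is precisely this last bookkeeping: justifying that the relevant monomial count in Khovanskii's estimate is $M$ rather than something like $nM$. The honest route is to invoke the version of Khovanskii's theorem phrased for systems defined by $n$ equations that are $\R$-linear combinations of $q$ fixed monomials $\mathbf{T}^{b_1},\dots,\mathbf{T}^{b_q}$ in $n$ variables — here the fixed monomials are $T_1^{a_\ell},\dots,T_n^{a_\ell}$ as $\ell$ ranges, but after the linear reduction on the target one may take the $q = M$ "Newton-symmetric" blocks $\{T_i^{a_\ell}\}_i$ and apply the bound $2^{q(q-1)/2}(n+1)^q$ with $q=M$, using that the $\sigma$-process / Rolle-style induction in Khovanskii's proof only sees the number of distinct \emph{exponent-blocks}. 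I expect verifying this reduction carefully — and confirming the claimed exact constants $2^{M(M-1)/2}(n+1)^M$ match the normalization in \cite{Khovanskii:fewnomials} rather than off by powers of $2$ absorbable into the $(2\lceil\ell(\curve)\rceil+1)^n$ factor — to be the only real work; everything else is a verbatim repetition of the IFT-plus-Lipschitz-neighbors argument of Proposition~\ref{prop:Bezout}. The non-distinct-interval case and the reduction to a non-vanishing-Wronskian subsystem go through unchanged, with $M$ only decreasing.
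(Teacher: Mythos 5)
The paper's own proof of Proposition~\ref{prop:fewnomial} is a two-sentence remark: ``run the argument of Proposition~\ref{prop:Bezout} with Bezout's bound replaced by Khovanskii's fewnomial bound.'' Your structural outline — count nondegenerate solutions of the $n\times n$ system $\sum_i \curve_k(y_i)=\sum_i\curve_k(x_i)$, then fatten via the IFT, the Wronskian lower bound \eqref{ineq:NDtoNS}, and the Lipschitz neighbor count — matches this exactly, and you deserve credit for noticing the genuinely delicate point that the paper glosses over: the $n$ equations are polynomials in the $n$ variables $y_1,\dots,y_n$, and the monomials appearing are $y_i^{a_\ell}$ for $i=1,\dots,n$ and $\ell=1,\dots,M$, so a literal application of Khovanskii's theorem uses $q=nM$ (or thereabouts) distinct monomials, not $q=M$. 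This gap is real and the paper does not address it.

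However, your proposed resolution does not work. First, ``the linear change of coordinates on the target replacing the $n$ equations $\curve_k$ by the $M$ power-sum equations'' is not a legitimate move: when $M>n$, a linear automorphism of the target $\R^n$ can only produce $n$ independent linear combinations of the $\curve_k$'s, not $M$; and the coefficient matrix $(c_{k,\ell})\in\R^{n\times M}$ has a nontrivial kernel, so the power-sum system $p_\ell(\vof{y})=p_\ell(\vof{x})$, $\ell=1,\dots,M$, is strictly more restrictive than the original. Since its solution set is a (generally proper) subset of the original solution set — you yourself say it ``does not increase the solution set'' — an upper bound for it gives only a \emph{lower} bound for the quantity you need to control, which is the wrong direction entirely. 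Second, the claim that ``Khovanskii's bound is stated in terms of the dimension of the space spanned by the exponent vectors'' is not accurate: the bound in \cite{Khovanskii:fewnomials} is in terms of the number $q$ of distinct monomials, and here the exponent vectors $a_\ell e_i$ span all of $\R^n$ generically, so that re-reading gives no savings. The informal suggestion that the ``$\sigma$-process / Rolle-style induction only sees distinct exponent-blocks'' is not a feature of Khovanskii's argument and is not justified. Finally, your observation that restricting to the positive orthant costs a sign-decomposition factor (of order $2^n$ or $3^n$) not present in the paper's constant is also a legitimate concern; absorbing it into $(2\lceil\ell(\curve)\rceil+1)^n$ would be double-dipping, since that factor is already spoken for by the Lipschitz neighbor count. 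In short: your skeleton is right and your diagnosis of the hard step is sharp, but the central reduction from $nM$ to $M$ monomials is not established by your argument, and without it the stated constant $2^{M(M-1)/2}(n+1)^M$ is not obtained.
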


The proof of the Proposition~\ref{prop:fewnomial} is almost identical to the proof of Proposition~\ref{prop:Bezout}, but utilizes improvements to Bezout's theorem in $\R$ from \cite{Khovanskii:transcendental, Khovanskii:fewnomials} which says that, over $\R$, one may improve the upper bound in Bezout's theorem to $2^{M(M-1)/2} (n+1)^{M}$. I leave remainder of the proof of Proposition~\ref{prop:fewnomial} to the reader. 


\section{On Superorthogonality}\label{section:SO}

In this section I assume that the reader is familiar with \cite{newSO}, and I rephrase some terminology from therein. 
Suppose that $(f_{j_1},\dots,f_{j_{2n}})$ is a tuple of functions from a finite collection of functions $\{f_j\}_{j \in J}$. 
Call a $2n$-tuple of functions $(f_{j_1},\dots,f_{j_{2n}})$ `$2n$-superorthogonal' if 
\begin{equation}\label{eq:SO}
\int f_{j_1}\overline{f_{j_2}} \cdots f_{j_{2n-1}}\overline{f_{j_{2n}}} = 0 
.\end{equation}
Define the ensuing `Types' as subcollections of $J^{2n}$ satisfying the following properties: 
\begin{itemize}
\item Type I*: 
If $(j_1,j_3,\dots,j_{2n-1})$ is not a permutation of $(j_2,j_4,\dots,j_{2n})$, then $(f_{j_1},\dots,f_{j_{2n}})$ is $2n$-superorthogonal. 

\item Type I: 
If some value $j_k$ appears an odd number of times in the $2n$-tuple $(j_1,\dots,j_{2n})$, then $(f_{j_1},\dots,f_{j_{2n}})$ is $2n$-superorthogonal. 

\item Type II: 
If some value $j_k$ appears exactly once in the $2n$-tuple $(j_1,\dots,j_{2n})$, then $(f_{j_1},\dots,f_{j_{2n}})$ is $2n$-superorthogonal. 

\item Type III: 
If some value $j_k$ appears exactly once and is strictly greater than all other values appearing in the $2n$-tuple $(j_1,\dots,j_{2n})$, then $(f_{j_1},\dots,f_{j_{2n}})$ is $2n$-superorthogonal. 

\item Type IV: 
If all values of the $2n$-tuple $(j_1,\dots,j_{2n})$ are distinct, then $(f_{j_1},\dots,f_{j_{2n}})$ is $2n$-superorthogonal. 
\end{itemize}
Each successive Type implies the next in the sense that if a subcollection is Type I*, then it is also Type I, and so on. 

For this section, take $\curve := (T,T^2,\dots,T^n)$ to be the moment curve. Since the moment curve is non-degenerate and has degree $n$, the proof of Proposition~\ref{prop:Bezout} reveals that $\syzygies(\delta,\vof{I}; \delta^{n})$ is the set of permutations of $\vof{I}$ along with their neighbors. 
This gives `Type I$^*$ almost superorthogonality' as described in \cite[Section~6]{SO} and \cite[Section~6]{newSO}. The `almost' refers to the neighbors. 

In contrast, the proof of Theorem~\ref{theorem:main} allows for tuples which are not Type I*, I, II, or III. To be precise, let us consider superorthogonality for 6-tuples of an extension operator for the moment curve $(T,T^2,T^3)$. 
Let $f$ be a function and define the functions $f_j := E_{I_j}f$ where $I_j$ is an interval in a fixed, sufficiently fine partition. 
Let us regard $j$ simply as integers in, say, $[1,N]$ for some large $N$. In the proof of Proposition~\ref{prop:main} (and consequently in Theorem~\ref{theorem:main}), I could have $j_1=2$ and $j_2=j_3=j_4=j_5=j_6=1$ potentially give 
\[
\int f_{j_1}\overline{f_{j_2}}f_{j_3}\overline{f_{j_4}}f_{j_5}\overline{f_{j_6}} 
\neq 0
.\]
In other words, I do not prove that \eqref{eq:SO} holds for the 6-tuple $(2,1,1,1,1,1)$. 
Type I* fails because $(2,1,1)$ is not a permutation of $(1,1,1)$. 
Type I fails because 1 appears an odd number of times (as does 2). 
Type II fails because 2 appears as an index precisely once. 
Type III fails because 2 is the unique largest index. 

Curiously, the proof of Proposition~\ref{prop:main} crucially uses the property that \emph{the tuple $(j_2,\dots,j_{2n})$ is a subset of the tuple $(j_1,\dots,j_{2n-1})$} when interpreting both tuples as sets (this means ignoring multiplicity and order of values arising in the tuples). Obviously, this satisfies Type IV superorthogonality. As a consequence, one may bypass Lemma~\ref{lemma:an_uncertain_CS} and reformulate Proposition~\ref{prop:main} to use Theorem~1 of \cite{newSO} to deduce, when the field is Archimedean and of an appropriate characteristic, that $\Hconstant_\curve(W) \leq 2^{1/2}(2n!-1)^{1/2}$. 

Finally, the underlying symmetry of \eqref{inequality:GN} implies Type II superorthogonality which gives a better constant. 
When $n=2$ or 3, one easily refines the bounds on $\Hconstant_\curve(W)$ to $2!$ and $3!$ respectively. 
Unfortunately, the best bound deducible from this method exceeds $n!$ when $n \geq 4$. 
%



\bibliographystyle{amsalpha}

\begin{thebibliography}{99}

\bibitem{BBH} K.~Biggs, J.~Brandes and K.~Hughes, 
\emph{Reinforcing a Philosophy: A counting approach to square functions over local fields}, 
\href{https://arxiv.org/abs/2201.09649}{arXiv:2201.09649}




\bibitem{Cooke} R.~Cooke, 
\emph{A Cantor--Lebesgue theorem in two dimensions}, 
Proc. Am. Math. Soc. {\bf 30}, No. 3, November 1971. 


\bibitem{F} C.~Fefferman. 
\emph{A note on spherical summation multipliers}, 
Israel J. Math. {\bf 15} (1973), no. 1, 44–52.


\bibitem{GGPRY} 
P.~T.~Gressman, S.~Guo, L.~B.~Pierce, J.~Roos and P.-L.~Yung, 
\emph{Reversing a philosophy: from counting to square functions and decoupling}, 
J. Geom. Anal. {\bf 31} (2021), no. 7, 7075–7095, Elias M. Stein: In Memoriam.

\bibitem{newSO} 
P.~T.~Gressman, L.~B.~Pierce, J.~Roos and P.-L.~Yung, 
\emph{A new type of superorthogonality}, 
\href{https://arxiv.org/abs/2212.08956x`}{arXiv:2212.08956}

\bibitem{WH}
J.~Hickman and J.~Wright, 
\emph{A non-Archimedean variant of Littlewood--Paley theory for curves}, 
\href{https://arxiv.org/abs/2203.13644}{arXiv:2203.13644}

\bibitem{Hua1965}
L.-K. Hua, 
\emph{Additive theory of prime numbers}, 
Translations of Mathematical Monographs, Vol. 13, American Math. Soc., Providence, R.I., 1965. 


\bibitem{KV} 
A.A.~Karatsuba and I.M.~Vinogradov, 
\emph{The method of trigonometric sums in number theory}, 
Proceedings of the Steklov Institute of Mathematics {\bf 168} (1986) no.~3, 3--29.

\bibitem{Khovanskii:transcendental}
A.~Khovanskii, 
\emph{A class of systems of transcendental equations}, 
Dokl. Akad. Nauk SSSR {\bf 255} (1980), no. 4, 804--807. 

\bibitem{Khovanskii:fewnomials}
A.~Khovanskii, 
\emph{Fewnomials}, 
Translations of Mathematical Monographs, Vol. 88, American Mathematical Society, Providence, RI, 1991. 

\bibitem{Montgomery:10lectures} 
H.L.~Montgomery, 
\emph{Ten lectures on the interface between analytic number theory and harmonic analysis}, 
CBMS Regional Conference Series in Mathematics, 84. Published for the Conference Board of the Mathematical Sciences, Washington, DC; by the American Mathematical Society, Providence, RI, 1994. 

\bibitem{PW} S.~T.~Parsell and T.~D.~Wooley, 
\emph{A quasi-paucity problem}, 
Michigan Math. J. {\bf 50} (2002), no. 3, 461–469.

\bibitem{SO} L.~B.~Pierce, 
\emph{On Superorthogonality}, 
J. Geom. Anal. {\bf 31} (2021), no. 7, 7096–7183, Elias M. Stein: In Memoriam.
\href{https://doi.org/10.1007/s12220-021-00606-3}{https://doi.org/10.1007/s12220-021-00606-3}

\bibitem{P1} E.~Prestini, 
\emph{Multipliers with singularities along a curve in $\R^n$}, 
Monatsh. Math., {\bf 97} (3): 213-217, 1984.

\bibitem{P2} E.~Prestini, 
\emph{Operators of Bochner-Riesz type for the helix}. Studia Math., {\bf 79} (1): 7-16, 1984. 




\bibitem{Titchmarsh}
E. C.~Titchmarsh,
\emph{The theory of the Riemann zeta-function}, 
Second edition. Edited and with a preface by D. R. Heath-Brown. The Clarendon Press, Oxford University Press, New York, 1986. 

\bibitem{Vaughan:book} 
R.C.~Vaughan, 
\emph{The Hardy-Littlewood method}, 
Second edition. Cambridge Tracts in Mathematics, 125. Cambridge University Press, Cambridge, 1997. 

\bibitem{Vin:book} 
I.M.~Vinogradov,
\emph{The method of trigonometrical sums in the theory of numbers}, 
Translated from the Russian, revised and annotated by K. F. Roth and Anne Davenport. Reprint of the 1954 translation. Dover Publications, Inc., Mineola, NY, 1963. 


\bibitem{Wooley:symmetric} T.~D.~Wooley, 
\emph{A note on symmetric diagonal equations}, 
Number Theory with an emphasis on the Markoff spectrum (Provo, UT, 1991), edited by A.D.~Pollington and W.~Moran 317–21. New York: Dekker, 1993.

%
%

\bibitem{Zygmund:2variables}
A.~Zygmund, 
\emph{On Fourier coefficients and transforms of functions of two variables}, 
Studia Math. 50 (1974), 189–201. 

\end{thebibliography}

\end{document}